\newtheorem{theo}{Theorem}[section]
\newtheorem{defi}[theo]{Definition}
\newtheorem{prin}[theo]{Principle}
\newtheorem{lem}[theo]{Lemma}
\newtheorem{prop}[theo]{Proposition}
\newcommand{\R}{{\mathbb  R}}
\newcommand{\laplace}{\Delta}
\newcommand{\Hm}{{\mathcal H}}
\newcommand{\Tang}{{\mathcal T}}
\newcommand{\Norm}{{\mathcal N}}
\newcommand{\Order}{{\mathcal O}}
\newcommand{\bd}{\partial}
\newcommand{\pd}{\partial}
\newcommand{\wo}{\setminus}
\newcommand{\dotarg}{\, \cdot \,}
\DeclareMathOperator{\diver}{div}
\DeclareMathOperator{\trace}{trace}
\DeclareMathOperator{\cp}{cp}
\DeclareMathOperator{\ecp}{ecp}
\DeclareMathOperator{\sd}{sd}
\DeclareMathOperator{\id}{id}
\DeclareMathOperator{\codim}{codim}
\DeclareMathOperator{\spa}{span}
\title{Calculus on Surfaces with General Closest Point Functions}
\author{Thomas M\"{a}rz, Colin B.~Macdonald
	\thanks{ Mathematical Institute,
				University of Oxford,
				OX1 3LB, UK.
				Email: {\tt \{maerz,macdonald\}@maths.ox.ac.uk}.
				This work was supported by award KUK-C1-013-04 made by King Abdullah University of Science and Technology (KAUST).
				Manuscript as of \today.}}
\begin{document}

\maketitle

\begin{abstract}
  The Closest Point Method for solving partial differential equations
  (PDEs) posed on surfaces was recently introduced by Ruuth
  and Merriman [J. Comput. Phys. 2008] and successfully applied to a
  variety of surface PDEs.
  In this paper we study the theoretical foundations of this method.
  The main idea is that surface differentials of a surface function can be
  replaced with Cartesian differentials of its closest point extension, i.e.,
  its composition with a closest point function.
  We introduce a general class of these closest point functions (a subset of differentiable retractions),
  show that these are exactly the functions necessary to satisfy the above idea,
  and give a geometric characterization of this class.
  Finally, we construct some closest point functions and demonstrate
  their effectiveness numerically on surface PDEs.
\end{abstract}

\begin{keywords}
	Closest Point Method, retractions, implicit surfaces, surface-intrinsic differential operators, Laplace--Beltrami operator
\end{keywords}

\begin{AMS}
65M06, 
57R40, 
53C99, 
26B12  
\end{AMS}

\pagestyle{myheadings}
\thispagestyle{plain}
\markboth{T. M\"{A}RZ, C. B. MACDONALD}{CALCULUS ON SURFACES WITH GENERAL CLOSEST POINT FUNCTIONS}

\section{Introduction}\label{sect:Intro}
The Closest Point Method is a set of mathematical principles and
associated numerical techniques for solving partial differential
equations (PDEs) posed on surfaces.
It is an embedding technique
and is based on an implicit representation of the surface $S$.
The original method \cite{sjr:SimpleEmbed} uses a representation induced by Euclidean distance;
specifically, for any point $x$ in an embedding space $\R^n$ containing $S$, a point $\ecp(x) \in S$ is
known which is closest to $x$ (hence the term ``Closest Point Method''). The function $\ecp$ is the Euclidean closest point function.

This kind of representation of a surface $S$ will be generalized: we introduce a general class of \emph{closest point functions} (and denote a member by ``$\cp$'')
within the set of
differentiable retractions. The \emph{closest point extension} of a surface function $u$ is given by $u(\cp(x))$.
The common feature of all closest point functions $\cp$ is that the extension $u(\cp(x))$ locally propagates data $u$ perpendicularly off the surface $S$ into the surrounding space $\R^n$.
In this way, closest point extensions lead to simplified derivative calculations in
the embedding space---because $u(\cp(x))$ does not vary in the
direction normal to the surface.
More specifically, we have (for every closest point function):

\emph{Gradient Principle}: intrinsic gradients of surface functions
agree on the surface with Cartesian gradients of the closest point
extension.

\emph{Divergence Principle}: surface divergence operators of surface
vector fields agree on the surface with the Cartesian divergence
operator applied to the closest point extension of those vector
fields.

These are stated more precisely and proven as Principles~\ref{cor:PrinGrad}
and~\ref{cor:PrinDiv}.  Combinations of these two principles may be
made, to encompass higher order differential operators for example the
Laplace--Beltrami and surface biharmonic operators.
These principles can then be used to replace the intrinsic spatial
operators of surface PDEs with Cartesian derivatives in the embedded
space.

Numerical methods based on these principles are compelling because
they can re-use simple numerical techniques on Cartesian grids such as
finite difference methods and standard interpolation schemes
\cite{sjr:SimpleEmbed}.  Other advantages include the wide variety of
geometry that can be represented, including both open and closed
surfaces with or without orientation in general codimension (e.g.,
filaments in 3D \cite{sjr:SimpleEmbed} or a Klein bottle in 4D
\cite{cbm:lscpm}).  In this way, the Closest Point Method has been
successfully applied to a variety of time-dependent problems including
in-surface advection, diffusion, reaction-diffusion, and
Hamilton--Jacobi equations \cite{sjr:SimpleEmbed, cbm:lscpm},
where standard time integration schemes are used.  It has been shown
to achieve high-order accuracy \cite{cbm:lscpm,cbm:icpm}.  It has also been used for
time-independent problems such as eigenvalue problems on surfaces
\cite{cbm:eigenmodes}.

The remainder of this paper unfolds as follows. In
Section~\ref{sect:calc}, we review both notation and a calculus on
embedded surfaces which does not make use of parametrizations.  In
Section~\ref{sect:calcCP}, we define our class of closest point
functions within the class of differentiable retractions.  The key
property is that, for points on the surface, the Jacobian matrix of
the closest point function is the projection matrix onto the tangent
space.  We show this class of functions gives the desired simplified
derivative evaluations and that it is the largest class of retractions that does so.
We thus prove all of these functions induce the closest point
principles of \cite{sjr:SimpleEmbed} (in fact, a more general
divergence operator is established).  Finally, we give a geometric
characterization of these functions (namely that the pre-image of the
closest point function intersects the surface orthogonally).
Section~\ref{sect:diffusion} discusses general diffusion operators:
these can be treated with the principles established in
Section~\ref{sect:calcCP} by using two extensions but there are also
many interesting cases (including the Laplace--Beltrami operator)
where one can simply use a single extension.
Section~\ref{sect:Const} describes one possible construction method
for non-Euclidean closest point functions which makes use of a
multiple level-set description of $S$. This construction method can be
realized numerically which we demonstrate in an example.  Finally, in
Section~\ref{sect:CPM} we use the non-Euclidean closest point
representation to solve an advection problem and a diffusion problem
on a curve embedded in $\R^3$.


\section{Calculus on Surfaces without Parametrizations}\label{sect:calc}
In this section, we review notation and definitions to form a calculus on surfaces embedded in $\R^n$
(see e.g., \cite{GLKB:Calc, JWM:TopoDiffView, JO:DiffGeoApps, AS:MCMAC, DDEH:hNarrow, DE:EulerianFEM, DG:Beltrami}).

\subsection{Smooth Surfaces}\label{subsect:surf}
Throughout this paper we consider smooth surfaces $S$ of dimension $\dim S = k$ embedded in $\R^n$ ($k \leq n$) which possess a tubular neighborhood \cite{MWH:DiffTopo}.
We refer to this tubular neighborhood as $B(S)$, a \emph{band} around $S$.

In the case that $S$ has a boundary $\bd S \neq \emptyset$, we identify $S$ with its interior $S = S \wo \bd S$.
Moreover, we assume that $S$ is orientable, i.e., if $\codim S = n-k \neq 0$ there are smooth vector fields
$N_i : S \to \R^n$, $i = 1,\ldots, n-k$
which span the normal space $\Norm_y S = \spa\{ N_1(y),\ldots, N_{n-k}(y)\}$ (see e.g., \cite[Proposition 6.5.8]{AMR:Manifolds}).
The vectors $N_i(y)$ are not required to be pairwise orthogonal, but we require them to be normed $|N_i(y)|=1$.
Finally, we define the $\R^{n \times k}$-matrix $N(y) := (N_1(y) | \ldots | N_{n-k}(y))$ which contains all the normal vectors as columns.

The tubular neighborhood assumption is sufficient for the existence of retractions (see \cite{MWH:DiffTopo}). This is important because the closest point functions
defined in Section~\ref{sect:calcCP} are retractions. Note that every smooth surface embedded in $\R^n$ without boundary has a tubular neighborhood by \cite[Theorem 5.1]{MWH:DiffTopo}.
Moreover, the orientability side condition is not restrictive since it will be satisfied locally when referring to sufficiently small subsets of the surface.

The matrix $P(y)$ denotes the orthogonal projector that projects onto the tangent space $\Tang_y S$ of $S$ at $y \in S$.
$P$ as a function of $y \in S$ is a tensor field on the surface and can be written in terms of the normal vectors as
\[
	P:S \to \R^{n \times n} \;, \quad P(y) = I - N(y) \cdot N(y)^{\dagger}
\]
where $N(y)^{\dagger}$ denotes the pseudo-inverse of $N(y)$.
If $\codim S = 1$ then $P$ is given by $P(y) = I - N(y) \cdot N(y)^T$ since the matrix $N(y)$ has only a single normed column vector.

Regarding the degree of smoothness: later when we speak about $C^l$-smooth surface functions and differentials up to order $l$,
we assume that the underlying surface $S$ is at least \mbox{$C^{l+1}$-smooth}.

\subsection{Smooth Surface Functions and Smooth Extensions}\label{subsect:FuncAndExt}
Given a surface $S$, we consider smooth surface functions: a scalar function $u:S \to \R$, vector-valued function $f:S \to \R^m$, and vector field $g:S \to \R^n$.
We call $g$ a vector field because it maps to $\R^n$, the embedding space of $S$, and hence we can define a divergence operation for it.
The calculus without parametrizations for such surface functions is based on smooth extensions, defined in the following.

\begin{defi}{(Extensions)}\label{def:ext}
	We call $u_E: \Omega_E \to \R$ an extension of the surface function $u:S \to \R$---and likewise for vector-valued surface functions---if the following properties hold:
	\begin{itemize}
		\item $\Omega_E \subset \R^n$ is an open subset of the embedding space.
		\item $\Omega_E$ contains a surface patch $S \cap \Omega_E \neq \emptyset$.
		\item $u_E|_{S \cap \Omega_E} = u|_{S \cap \Omega_E}$, the extension and the original function coincide on the surface patch.
		\item $u \in C^l(S \cap \Omega_E) \, \Rightarrow \, u_E \in C^l(\Omega_E)$, the extension is as smooth as the original function.
	\end{itemize}
	As extensions are not unique, $u_E$ denotes an arbitrary representative of this equivalence class.
\end{defi}

Different extensions might have different domains of definition.
Here $\Omega_E$ is just a generic name for an extension domain that suits the chosen extension and that contains the surface point $y \in S$ which is under consideration.

An important point is that such extensions exist and may be obtained by using the following feature of a smooth surface:
for every $y\in S$ there are open subsets $\Omega_E$,$W$ of $\R^n$, where $y \in \Omega_E$, and a diffeomorphism
$\psi : \Omega_E \to W$ ($\psi, \psi^{-1}$ are as smooth as $S$) that locally flattens $S$
\[
	\psi( \Omega_E \cap S ) = W \cap \R_0^k \;,\quad k = \dim S \;, \quad \R_0^k  := \left\{ x \in \R^n : x=(x_1,\ldots,x_k,0,\ldots,0) \right\} \;,
\]
see e.g., \cite[Chapter 3.5]{KK:Ana2}.
Now, $\psi^{-1}: W \cap \R_0^k \to \Omega_E \cap S$ parametrizes the patch $\Omega_E \cap S$ of $S$, so $u \circ \psi^{-1} : W \cap \R_0^k \to \R$
is a smooth function. Let $\tilde{P} \in \R^{n \times n}$ be the projector onto $\R_0^k$.
For $x \in \Omega_E$, the function $\tilde{P} \cdot \psi(x)$ maps  smoothly onto $W \cap \R_0^k$, hence
\[
	u_E : \Omega_E \to \R \;,\quad u_E(x) := u \circ \psi^{-1} \left( \tilde{P} \cdot \psi(x) \right)
\]
is smooth and extends $u$ as desired.

\subsection{Calculus without Parametrizations}
Now, we define the basic first order differential operators on $S$ without the use of parametrizations
(compare to e.g., \cite{GLKB:Calc, JWM:TopoDiffView, JO:DiffGeoApps, AS:MCMAC, DDEH:hNarrow, DE:EulerianFEM, DG:Beltrami}):

\begin{defi}\label{def:diffS}
    Let $S$ be a smooth surface embedded in $\R^n$, $y \in S$ a point on the surface, and $P(y) \in \R^{n \times n}$ the projector (onto $\Tang_y S$) at this point $y$.
    Then the surface gradient $\nabla_S$ of a scalar $C^1$-function $u$, the surface Jacobian $D_S$ of a vector-valued $C^1$-function $f$, and the surface divergence $\diver_S$ of a $C^1$ vector field $g$
    are given by:
    \begin{align*}
	    \nabla_S u(y)^T &:= \nabla u_E(y)^T \cdot P(y) \;,\\
	    D_S f(y) &:= D f_E(y) \cdot P(y) \;,\\
	    \diver_S g(y) &:= \trace( D_Sg(y) )  = \trace( D g_E(y) \cdot P(y) ) \;.
    \end{align*}
    Here, $\nabla$ is the gradient and $D$ the Jacobian in the embedding space $\R^n$ applied to the extensions of the surface functions.
	 The extensions are arbitrary representatives of the equivalence classes of Definition \ref{def:ext}.
\end{defi}

\paragraph{Remark}
There are other works (see e.g., \cite{ABL:HypCLawsOnMan}) that use a variational definition of
the surface divergence operator, which we denote by $\diver_S^* g$, as
\[
	\int_{\Omega} v \; \diver_S^* g \; d\Hm^n(y) = -\int_{\Omega} \left< \nabla_S v, g \right > \; d\Hm^n(y)
	\;,\quad \forall \; v \in C_0^{1}(\Omega) \;, \; \Omega \subset S \;.
\]
But this definition applies only to \emph{tangential} vector fields $g$, because this tangency is required by the surface Gauss--Green Theorem.
The connection to our definition is as follows
\[
	\diver_S^* g = \diver_S (Pg) \;,
\]
because $\diver_S^*$ takes into account only the tangential part $Pg$ of the vector field $g$.
The two are equal if the vector field $g$ is indeed tangential to the surface.
The trace-based definition of surface divergence in Definition~\ref{def:cpop} also applies to non-tangential fields \cite{AS:MCMAC}.
For example when $\codim S = 1$ and $g$ is a non-tangential vector field, it can be shown that
\[
	\diver_S g = \diver_S (Pg) + \diver_S ( \left< N, g \right> \cdot N) = \diver_S (Pg) - \left< N, g \right> \kappa_S \;,
\]
where $N$ is the normal vector, and $\kappa_S= -\diver_S N$ is the mean curvature of $S$.
That is, the surface divergence of $g$ is the surface divergence of the tangential component plus an extra term which depends on the curvature(s) of $S$.

\section{Calculus on Surfaces with Closest Point Functions}\label{sect:calcCP}
Our aim is to compute surface intrinsic derivatives by means of \emph{closest point functions}.
For now, we consider only first order derivatives, higher order derivatives are discussed in Section~\ref{subsect:DiffHOrder}.
By the remark on smoothness in Section~\ref{subsect:surf}, this means we are considering $C^2$-smooth surfaces.
Closest point functions are retractions with the key property that their Jacobian evaluated on the surface is the projector $P$.

\begin{defi}{(Closest Point Functions)}\label{def:cpop}
	We call a map $B(S) \to S$ a closest point function $\cp$, if
	\begin{enumerate}[1.]
		\item $\cp$ is a $C^1$-\emph{retraction}, i.e., $\cp: B(S) \to S$ features the properties
		\begin{enumerate}[a)]
			\item $\cp \circ \cp = \cp$ or equivalently $\cp|_S = \id_S$,\footnote{Here $\id_S:S \to S$ with $\id_S(y)=y$ denotes the identity map on the surface $S$.}
			\item $\cp$ is continuously differentiable.
		\end{enumerate}
		\item $D \cp (y) = P(y)$ for all $y \in S$.
	\end{enumerate}
\end{defi}

If $\cp$ belongs to this class of closest point functions, we recover surface differential operators (those in Definition~\ref{def:diffS})
by standard Cartesian differential operators applied to the closest point extensions $u \circ \cp$.
This fundamental point is the basis for the Closest Point Method and is established by the following theorem:

\begin{theo}{(Closest Point Theorem)}\label{theo:cpopDiff}
	If $\cp: B(S) \to S$ is a closest point function according to Definition~\ref{def:cpop}, then the following rule
	\begin{equation}\label{eqn:cpCalc}
		  D [f \circ \cp](y) = D_S f(y) \;,  \quad  y \in S,
	\end{equation}
	holds for the surface Jacobian $D_S f$ of a continuously differentiable vector-valued surface function $f:S \to \R^m$.
\end{theo}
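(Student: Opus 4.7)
The plan is to reduce the statement to a one-line application of the chain rule, combined with the two defining properties of a closest point function. The obstruction to writing the chain rule naively is that $f$ is only defined on $S$, so $f \circ \cp$ is a composition of a function on $\R^n$ (namely $\cp$) with a function on $S$. To repair this, I would choose a smooth extension $f_E : \Omega_E \to \R^m$ of $f$ in the sense of Definition~\ref{def:ext}, with $y \in \Omega_E$. Because $\cp$ is continuous and $\cp(y) = y \in \Omega_E$, there is a neighborhood $U \subset B(S)$ of $y$ with $\cp(U) \subset \Omega_E \cap S$, and on this neighborhood $f \circ \cp = f_E \circ \cp$ by the extension property $f_E|_{S \cap \Omega_E} = f|_{S \cap \Omega_E}$.

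Having replaced $f \circ \cp$ by $f_E \circ \cp$ near $y$, both factors are now $C^1$ maps between open subsets of Euclidean spaces, so the classical chain rule gives
\[
D[f \circ \cp](y) \;=\; D[f_E \circ \cp](y) \;=\; D f_E(\cp(y)) \cdot D\cp(y).
\]
Now I would invoke the two parts of Definition~\ref{def:cpop}: since $\cp$ is a retraction, $\cp(y) = y$ for $y \in S$, and by the Jacobian condition $D\cp(y) = P(y)$. Substituting yields $D[f \circ \cp](y) = D f_E(y) \cdot P(y)$, which is precisely $D_S f(y)$ by Definition~\ref{def:diffS}.

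There is nothing genuinely hard here; the only thing worth being careful about is that the right-hand side $D_S f(y)$ is well-defined independently of the extension chosen. This is automatic because $Df_E(y)\cdot P(y)$ only probes directions tangent to $S$ at $y$, on which any two extensions agree to first order; this independence is already built into Definition~\ref{def:diffS} and needs no further argument here. The only mild subtlety is ensuring the neighborhood $U$ exists so that the chain rule is legitimately being applied to $f_E \circ \cp$ rather than to the formally undefined $f \circ \cp$, and this is handled by the continuity of $\cp$ together with the openness of $\Omega_E$.
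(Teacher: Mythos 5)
Your proof is correct and follows essentially the same route as the paper: replace $f$ by an extension $f_E$, apply the chain rule, and substitute $\cp(y)=y$ and $D\cp(y)=P(y)$ to recognize $D_S f(y)$. The only difference is that you spell out the localization argument (existence of a neighborhood $U$ where $f\circ\cp = f_E\circ\cp$) which the paper leaves implicit.
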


We also show that any $C^1$-retraction that satisfies the relation in \eqref{eqn:cpCalc} must be a closest point function satisfying Definition~\ref{def:cpop}.

\begin{theo}\label{theo:cpopDiffInv}
	Let $r: B(S) \to S$ be a $C^1$-retraction (i.e., only part 1 of Definition~\ref{def:cpop}  is required).
	If for every continuously differentiable surface function $f:S \to \R^m$ the following holds
	\begin{equation}\label{eqn:cpCalc2}
		  D [f \circ r](y) = D_S f(y) \;,  \quad  y \in S,
	\end{equation}
	then $r$ satisfies also part 2 of Definition  \ref{def:cpop}
	\[
		D r (y) = P(y) \quad \text{for all} \quad y \in S,
	\]
	and is thus a closest point function.
\end{theo}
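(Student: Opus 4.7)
The plan is to feed carefully chosen test functions into the hypothesis \eqref{eqn:cpCalc2} and read off $D r(y) = P(y)$ directly. The core observation is that \eqref{eqn:cpCalc2} provides, for every admissible $f$, a linear constraint on the matrix $D r(y) - P(y)$; a single well-chosen $f$ will then force this matrix to vanish.

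First I would combine the Cartesian chain rule with the extension machinery of Definition~\ref{def:diffS}. Given any $C^1$ surface function $f : S \to \R^m$ and any extension $f_E$ defined on a neighbourhood of $y \in S$, the composition $f \circ r$ agrees with $f_E \circ r$ on a neighbourhood of $y$, since $r$ takes values in $S$, where $f_E$ and $f$ coincide. Together with the retraction identity $r(y) = y$ for $y \in S$, the standard chain rule in $\R^n$ then yields
\[
    D[f \circ r](y) \;=\; D f_E(r(y)) \cdot D r(y) \;=\; D f_E(y) \cdot D r(y).
\]
Since Definition~\ref{def:diffS} gives $D_S f(y) = D f_E(y) \cdot P(y)$, the hypothesis \eqref{eqn:cpCalc2} is thus equivalent to
\[
    D f_E(y) \cdot \bigl( D r(y) - P(y) \bigr) \;=\; 0
\]
for every $y \in S$, every $f \in C^1(S, \R^m)$, and every admissible extension $f_E$.

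The second step is to choose $f$ so that the prefactor $D f_E(y)$ disappears. I would take $f$ to be the inclusion $S \hookrightarrow \R^n$, $f(y) = y$; the identity map on $\R^n$ restricted to $B(S)$ is a valid $C^\infty$ extension of $f$, and its Jacobian is the identity matrix. Substituting into the displayed identity immediately gives $D r(y) - P(y) = 0$ for every $y \in S$, which is the claim. (Equivalently, one may test component by component against the $n$ coordinate functions $f^{(i)}(y) = y_i$, whose natural extensions $x \mapsto x_i$ have gradient $e_i$, recovering each row of $D r(y) = P(y)$ in turn.) There is no substantial obstacle here; the only subtle point worth flagging is that the step $f \circ r = f_E \circ r$ relies on $r$ mapping into $S$, so the retraction structure of $r$ is genuinely essential---without it, the chain rule step could not even be phrased.
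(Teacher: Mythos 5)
Your proof is correct and takes essentially the same approach as the paper: the key step in both is to test \eqref{eqn:cpCalc2} with the surface identity $f = \id_S$, whose canonical extension $\id_{\R^n}$ has Jacobian $I$, so that the prefactor disappears and $Dr(y) = P(y)$ falls out directly. The paper phrases it slightly more tersely by noting $\id_S \circ r = r$ at the outset, but the underlying argument is identical.
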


\begin{proof}
	Ad Theorem~\ref{theo:cpopDiff}: the closest point extension can be written in terms of an arbitrary extension $f_E$:
	\begin{align*}
		f \circ \cp(x) &= f_E \circ \cp(x) \;, \quad  x \in B(S) \;.
	\end{align*}
	Now, we expand the differential using the chain rule\footnote{In this paper, we follow the
	convention that differential operators occur before composition in the order of operations.
	For example, $D f_E \circ \cp(x)$ means differentiate $f_E$ and then compose with $\cp(x)$.}
	\begin{align*}
		D [f \circ \cp](x) &= Df_E  \circ \cp(x) \cdot D\cp(x) \;, \quad  x \in B(S) \;.
	\end{align*}
	Next we set $x=y \in S$, a point on the surface,
	and by Definition~\ref{def:cpop} we have $\cp(y)=y$ and $D \cp(y) = P(y)$.  Finally, using the surface differential in Definition~\ref{def:diffS} gives
	\begin{align*}
		D [f \circ \cp](y) &= Df_E(y) \cdot P(y) = D_S f(y) \;, \quad  y \in S \;.
	\end{align*}

	Ad Theorem \ref{theo:cpopDiffInv}:
	we consider the identity map on $S$, $\id_S : S \to S$, $\id_S(y) = y$. The simplest extension of $\id_S$ is given by the identity map on the embedding space, $\id: \R^n \to \R^n$.
	By Definition \ref{def:diffS}, the surface Jacobian of the surface identity is
	\begin{equation}\label{eqn:PisDid}
		D_S \id_S(y) = D\id(y) \cdot P(y) = P(y) \;, \quad  y \in S \;.
	\end{equation}
	By assumption, \eqref{eqn:cpCalc2} is true for every smooth surface function. And so with $f = \id_S$ and the previous result we have
	\[
		Dr(y) = D[\id_S \circ r](y) =  D_S \id_S(y) = P(y) \;, \quad  y \in S \;.
	\]
	\hfill
\end{proof}

Direct consequences of the Closest Point Theorem~\ref{theo:cpopDiff} are the gradient and the divergence principles below.

\begin{prin}{(Gradient Principle)}\label{cor:PrinGrad}
	If $\cp: B(S) \to S$ is a closest point function according to Definition~\ref{def:cpop}, then
	\begin{equation}\label{eqn:cpCalcGrad}
		\nabla [u \circ \cp](y) = \nabla_S u(y) \;,  \quad  y \in S,
	\end{equation}
	holds for the surface gradient $\nabla_S u$ of a continuously differentiable scalar surface function $u:S \to \R$.
\end{prin}

\begin{prin}{(Divergence Principle)}\label{cor:PrinDiv}
	If $\cp: B(S) \to S$ is a closest point function according to Definition~\ref{def:cpop}, then
	\begin{equation}\label{eqn:cpCalcDiv}
		\diver [g\circ \cp](y) = \diver_S g(y) \;,  \quad  y \in S,
	\end{equation}
	holds for the surface divergence $\diver_S g$ of a continuously differentiable surface vector field $g:S \to \R^n$.
	Notably the vector field $g$ need not be tangential to $S$.
\end{prin}

\subsection{Geometric Characterization of Closest Point Functions}
A further characteristic feature of closest point functions is that the pre-image $\cp^{-1}(y)$
of a closest point function must intersect the surface $S$ orthogonally.
This fact will be established in Theorem~\ref{theo:CPbyGeo}, for which we will need smoothness of the pre-image.

\begin{lem}\label{lem:PreimageSmooth}
	Let $r:B(S) \to S$ be a $C^1$-retraction and let $y \in S$.
	The pre-image $r^{-1}(y)$ is (locally around $y$) a $C^1$-manifold.
\end{lem}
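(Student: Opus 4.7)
The plan is to apply the submersion theorem (a.k.a.\ regular value theorem or implicit function theorem) to $r$ at the point $y$. The only nontrivial input needed is that the Jacobian $Dr(y)$ has full rank $k = \dim S$; once this is in hand, the local manifold structure of the fiber is a routine consequence.

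First I would establish the rank of $Dr(y)$ by extracting two structural facts from the retraction property alone. On the one hand, because $r$ takes values in $S$, for every smooth curve $\eta$ in $B(S)$ the composition $r \circ \eta$ is a curve in $S$, so $Dr(x)\cdot \R^n \subseteq \Tang_{r(x)} S$ for every $x \in B(S)$; in particular $\operatorname{image}(Dr(y)) \subseteq \Tang_y S$. On the other hand, the identity $r|_S = \id_S$ implies that for any smooth curve $\gamma$ in $S$ through $y$, $r \circ \gamma = \gamma$, and differentiating gives $Dr(y)\cdot v = v$ for every $v \in \Tang_y S$. These two facts together say that $Dr(y)$ is an (in general oblique) projection onto $\Tang_y S$; its rank therefore equals $k$.

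Next I would reduce to a standard submersion. Pick a $C^1$-parametrization $\phi : U \subset \R^k \to V \subset S$ of $S$ near $y$ with $\phi(0) = y$, and consider
\[
	F := \phi^{-1} \circ r : r^{-1}(V) \to U \subset \R^k.
\]
Then $F$ is $C^1$, and by the chain rule $DF(y) = D\phi^{-1}(y) \cdot Dr(y)$. Since $Dr(y)$ has rank $k$ and $D\phi^{-1}(y)$ is a linear isomorphism from $\Tang_y S$ onto $\R^k$, the composition $DF(y)$ is surjective, i.e.\ $F$ is a submersion at $y$. By continuity of $DF$ and openness of the full-rank condition, $F$ is a submersion on an open neighborhood of $y$.

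Finally, the submersion theorem gives that the level set $F^{-1}(\phi^{-1}(y)) = r^{-1}(y)$ is, locally around $y$, an embedded $C^1$-submanifold of $\R^n$ of dimension $n-k$, which is exactly the claim. The main (in fact only) delicate step is the rank computation in the first paragraph, since one must leverage both $r(B(S)) \subseteq S$ and $r|_S = \id_S$ to pin the image and action of $Dr(y)$; everything after that is standard calculus.
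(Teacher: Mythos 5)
Your proof is correct and follows essentially the same route as the paper's: both first show, from $r(B(S))\subseteq S$ and $r|_S=\id_S$, that $Dr(y)$ is a rank-$k$ projection onto $\Tang_y S$, and then invoke the implicit function theorem. The only cosmetic difference is that the paper composes $r$ with the fixed linear map $x\mapsto T(y)^T(r(x)-y)$ rather than with a chart $\phi^{-1}$, which sidesteps the (standard, but worth noting) point that $\phi^{-1}$ must be understood as a $C^1$ map on $S$ or be extended locally off $S$ before the chain rule applies.
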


\begin{proof}
	By differentiation of the equation $\id_S \circ r(x) = r(x)$ and setting $x=y \in S$ thereafter,  we get
	\begin{equation}\label{eqn:PDr}
		P \circ r(x) \cdot Dr(x) = Dr(x)  \qquad \Rightarrow \quad P(y) \cdot Dr(y) = Dr(y) \;.
	\end{equation}
	We see that the range of the Jacobian matrix $Dr(y)$ is contained in the tangent space $\Tang_yS$. On the other hand,
	we can differentiate
	$ r \circ \id_S(y) = \id_S(y)$ with respect to $y \in S$ and right-multiply by a matrix $T(y) \in \R^{n \times k}$ which consists of an orthonormal basis
	of $\Tang_y S$ to get
	\begin{equation}\label{eqn:DrP}
		Dr(y) \cdot P(y) = P(y)  \qquad \Rightarrow \quad  Dr(y) \cdot T(y) = T(y) \;.
	\end{equation}
	Equation~\eqref{eqn:PDr} shows that the linear operator $Dr(y): \R^n \to \R^n$ is rank deficient while $Dr(y): \Tang_y S \to \Tang_y S$ is full rank
	by \eqref{eqn:DrP} and the rank is $k = \dim S$.
	Now, we can apply the Implicit Function Theorem to
	\[
		f_y(x) = T(y)^T \cdot (r(x)-y) = 0
	\]
	where $f_y : \R^n \to \R^k$ is a $C^1$-mapping and $r^{-1}(y)$ is the set of solutions.
	The preceding discussion shows that $Df_y(y) = T(y)^T \cdot Dr(y)$ is full rank, and hence $r^{-1}(y)$ is (locally around $y$) a $C^1$-manifold.
	\hfill
\end{proof}

\begin{theo}\label{theo:CPbyGeo}
	Let $r:B(S) \to S$ be a $C^1$-retraction and let $y \in S$.
	The retraction $r$ has the property $Dr(y)=P(y)$ (and hence $r$ is a closest point function according to Definition \ref{def:cpop}),
	if and only if for every $y \in S$ the $C^1$-manifold $r^{-1}(y)$ intersects $S$ orthogonally, i.e.,
	\[
		\Tang_y r^{-1}(y) = \Norm_y S \;.
	\]
\end{theo}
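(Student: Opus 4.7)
The plan is to prove both directions by identifying the tangent space of the fiber $r^{-1}(y)$ with the kernel of $Dr(y)$, and then using the two identities that already came out of the proof of Lemma~\ref{lem:PreimageSmooth}, namely
\[
	P(y) \cdot Dr(y) = Dr(y) \quad \text{and} \quad Dr(y) \cdot P(y) = P(y) \;.
\]
These say that $\operatorname{range} Dr(y) \subset \Tang_y S$ and that $Dr(y)$ acts as the identity on $\Tang_y S$, which will be the backbone of both implications.

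First I would establish the identification $\Tang_y r^{-1}(y) = \ker Dr(y)$. In the proof of Lemma~\ref{lem:PreimageSmooth}, $r^{-1}(y)$ is locally the zero set of $f_y(x) = T(y)^T(r(x)-y)$, so its tangent space at $y$ is $\ker Df_y(y) = \ker \bigl( T(y)^T Dr(y) \bigr)$. Since $\operatorname{range} Dr(y) \subset \Tang_y S$ and $T(y)^T$ is injective on $\Tang_y S$, this kernel coincides with $\ker Dr(y)$. Both $\Tang_y r^{-1}(y)$ and $\Norm_y S$ have dimension $n-k$, so the condition $\Tang_y r^{-1}(y) = \Norm_y S$ is equivalent to $\ker Dr(y) = \Norm_y S$.

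For the ``only if'' direction, assume $Dr(y)=P(y)$. Then $\ker Dr(y) = \ker P(y) = \Norm_y S$, and the identification above yields the geometric condition immediately. For the ``if'' direction, assume $\ker Dr(y) = \Norm_y S$. Given any $v \in \R^n$, decompose $v = P(y)v + (I-P(y))v$ with $P(y)v \in \Tang_y S$ and $(I-P(y))v \in \Norm_y S$. Applying $Dr(y)$, the identity $Dr(y)P(y) = P(y)$ handles the first summand while the assumption $\ker Dr(y) = \Norm_y S$ kills the second, giving $Dr(y) v = P(y) v$ for every $v$, hence $Dr(y) = P(y)$.

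The only substantive step is the identification $\Tang_y r^{-1}(y) = \ker Dr(y)$; once that is in hand, both directions are short linear-algebra computations driven by the two relations inherited from Lemma~\ref{lem:PreimageSmooth}. I do not foresee a serious obstacle, though one should be slightly careful that the tangent space of the fiber is really equal to the kernel of $Dr(y)$ and not merely contained in it, which is why the dimension count (both spaces are $(n-k)$-dimensional, since $\operatorname{rank} Dr(y) = k$) is needed to conclude equivalence of the two conditions.
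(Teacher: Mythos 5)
Your proof is correct, and it takes a genuinely different route from the paper's. The paper argues with curves: for both directions it takes a regular $C^1$-curve $\xi$ in the fiber with $\xi(0)=y$, differentiates $r\circ\xi\equiv y$ to get $Dr(y)\xi'(0)=0$, and then in the forward direction assembles $Dr(y)\cdot(T(y)\,|\,N(y))=(T(y)\,|\,0)$ to conclude $Dr(y)=T(y)T(y)^T=P(y)$, while in the converse direction it substitutes $Dr(y)=P(y)$ to read off $P(y)\xi'(0)=0$. You instead make the observation $\Tang_y r^{-1}(y)=\ker Dr(y)$ once and for all (via $\ker Df_y(y)=\ker(T(y)^T Dr(y))=\ker Dr(y)$, using that $\operatorname{range}Dr(y)\subset\Tang_y S$ and $T(y)^T$ is injective there), after which both implications collapse to short linear-algebra facts about $P(y)$. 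Your route is tighter and avoids introducing curves; it also makes the $(n-k)$-dimension count explicit, which closes a small gap in the paper's converse direction -- the paper only shows $\Tang_y r^{-1}(y)\subset\Norm_y S$ and leaves equality to the reader, whereas your rank argument (rank $Dr(y)=k$, already established in Lemma~\ref{lem:PreimageSmooth}) nails it down. One small thing worth noting: both routes rely on the two relations $P\circ r\cdot Dr=Dr$ and $Dr\cdot P=P$ on $S$ derived in Lemma~\ref{lem:PreimageSmooth}, so the dependence on that lemma is the same; the difference is purely in how the fiber's tangent space is handled.
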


\begin{proof}
	Let $y \in S$ and assume that the pre-image $r^{-1}(y)$ intersects $S$ orthogonally.
	Let $\xi: (-\varepsilon,\varepsilon) \to r^{-1}(y)$, $\varepsilon > 0$, be a regular $C^1$-curve in $r^{-1}(y)$ with $\xi(0) = y$.
	As $r^{-1}(y)$ is a $C^1$-manifold by Lemma \ref{lem:PreimageSmooth} such curves $\xi$ exist.
	Because $\xi(t) \in r^{-1}(y)$, $r$ maps every point $\xi(t)$ to $y$:
	\begin{align*}
		r \circ \xi(t) &= y \;, \quad \text{for all} \quad t \in (-\varepsilon,\varepsilon).
	\end{align*}
	Differentiation of the latter equation with respect to $t$ and the substitution of $t=0$ yield
	\begin{align}\label{eqn:DrN}
		Dr(y) \cdot \xi'(0) &= 0 \;.
	\end{align}
	By assumption $\xi'(0)$ is an arbitrary vector of the normal space $\Norm_y S$.
	Let (here) $N(y) \in \R^{n \times (n-k)}$ be a matrix where the columns form an orthonormal basis of $\Norm_y S$, then \eqref{eqn:DrN} implies
	\[
		Dr(y) \cdot N(y) = 0 \;.
	\]
	By the latter result combined with \eqref{eqn:DrP} the product of $Dr(y)$ with the $n \times n$ orthogonal matrix $(T(y) | N(y))$
	(the matrix $T(y) \in \R^{n \times k}$ consists of an orthonormal basis of $\Tang_y S$) is
	\[
		Dr(y) \cdot (T(y) | N(y)) = (T(y) | 0 \, ) \quad \Rightarrow \quad Dr(y) = T(y) \cdot T(y)^T = P(y) \;.
	\]
	This proofs the first direction.

	Let now $Dr(y)=P(y)$ for all $y \in S$, i.e., the retraction $r$ is already a closest point function.
	Let again $\xi: (-\varepsilon,\varepsilon) \to r^{-1}(y)$ be a regular $C^1$-curve in $r^{-1}(y)$ with $\xi(0) = y$.
	We consider again \eqref{eqn:DrN} but this time we replace $Dr(y)$ with $P(y)$:
	\[
		P(y) \cdot \xi'(0) = 0 \;.
	\]
	The latter tells us that every vector of $\Tang_y r^{-1}(y)$ is perpendicular to $\Tang_y S$, in other words $r^{-1}(y)$ intersects $S$ orthogonally.
	\hfill
\end{proof}

\subsection{The Euclidean Closest Point Function}
The class of closest point functions from Definition~\ref{def:cpop} is not empty. We show that the Euclidean closest point function $\ecp$
\begin{equation}
	\ecp(x) = \arg \min\limits_{y \in S} |x-y| \;, \qquad x \in B(S)
\end{equation}
---which is well-defined by our assumptions on surfaces in Section~\ref{subsect:surf}---belongs to this class.
However, this is not just a corollary of the last theorem, we have to show that $\ecp$ is continuously differentiable on the surface $S$.

\begin{theo}\label{theo:ECPisCP}
	The Euclidean closest point function $\ecp$ 
	is a closest point function satisfying Definition~\ref{def:cpop}.
\end{theo}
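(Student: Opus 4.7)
The plan is to verify the two clauses of Definition~\ref{def:cpop} for $\ecp$. Clause~1(a), the retraction property, is immediate: for $y \in S$ the minimiser of $|y - z|$ over $z \in S$ is $z = y$, so $\ecp|_S = \id_S$ and $\ecp \circ \ecp = \ecp$. The real content is to establish (i) that $\ecp$ is continuously differentiable on $B(S)$, and (ii) that $D\ecp(y) = P(y)$ for every $y \in S$.

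For smoothness I would use the tubular neighborhood theorem explicitly. Fix $y_0 \in S$ and choose, on a sufficiently small patch $U \subset S$ around $y_0$, a smooth orthonormal frame $N_1(y),\ldots,N_{n-k}(y)$ of $\Norm_y S$. Define the normal-bundle chart
\[
	\Phi: U \times B_\varepsilon(0) \to \R^n,\qquad
	\Phi(y,\lambda) := y + \sum_{i=1}^{n-k} \lambda_i\, N_i(y).
\]
At $(y_0,0)$ the derivative $D\Phi$ carries $\Tang_{y_0}S \oplus \R^{n-k}$ isomorphically onto $\Tang_{y_0}S \oplus \Norm_{y_0}S = \R^n$, so by the Inverse Function Theorem $\Phi$ is a $C^1$-diffeomorphism onto its image. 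Shrinking $B(S)$ if necessary (this is exactly the tubular-neighborhood construction assumed in Section~\ref{subsect:surf}), one checks that for each $x = \Phi(y,\lambda)$ in the tube the point $y$ is the \emph{unique} minimiser in $S$ of $|x - \cdot|$: the orthogonality $x - y \in \Norm_y S$ forces $|x-y'|^2 > |x-y|^2$ for every other $y' \in S$ close enough, and points $y'$ outside the patch are excluded by shrinking the tube radius. Consequently
\[
	\ecp = \pi_S \circ \Phi^{-1},
\]
where $\pi_S(y,\lambda) := y$, and therefore $\ecp$ inherits $C^1$ regularity from $\Phi^{-1}$.

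With smoothness in hand, the Jacobian identity follows most cleanly from the geometric characterisation in Theorem~\ref{theo:CPbyGeo}. The formula above identifies, for each $y \in S$,
\[
	\ecp^{-1}(y) = \Bigl\{ y + \textstyle\sum_{i=1}^{n-k} \lambda_i N_i(y) : |\lambda| < \varepsilon \Bigr\},
\]
an open piece of the affine subspace $y + \Norm_y S$. Its tangent space at $y$ is exactly $\Norm_y S$, so $\ecp^{-1}(y)$ intersects $S$ orthogonally, and Theorem~\ref{theo:CPbyGeo} yields $D\ecp(y) = P(y)$.

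The main obstacle is genuinely the first part---promoting the pointwise minimiser definition of $\ecp$ to a globally $C^1$ map on $B(S)$---because an $\argmin$ need not be smooth, or even single-valued, away from $S$. Pinning this down amounts to showing that, on a sufficiently thin tube, the normal-bundle map $\Phi$ really is a diffeomorphism whose inverse realises the closest-point assignment; this is where the tubular neighborhood hypothesis is used in an essential way. Once smoothness is secured the computation of $D\ecp(y)$ reduces to a one-line appeal to Theorem~\ref{theo:CPbyGeo}.
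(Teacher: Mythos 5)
Your proposal is correct, and it takes a genuinely different route from the paper's proof. Both arguments share the same geometric object --- the normal-bundle chart $\Phi(y,\lambda) = y + \sum_i \lambda_i N_i(y)$, which in the paper appears as $\Phi(\theta) = \gamma(\theta_1) + N\circ\gamma(\theta_1)\,\theta_2$ --- but they use it differently. You invoke the Inverse Function Theorem once to conclude $\Phi$ is a $C^1$-diffeomorphism, write $\ecp = \pi_S\circ\Phi^{-1}$, and then hand the Jacobian computation off to Theorem~\ref{theo:CPbyGeo}: since $\ecp^{-1}(y)$ is visibly a piece of the affine normal space, orthogonality is immediate. The paper instead treats $\codim S = 1$ separately via the signed distance function $\sd$ (where $\ecp = x - \sd\nabla\sd$ is manifestly $C^1$ with $D\ecp(y)=P(y)$ by direct differentiation), and in higher codimension differentiates the retraction identity $\gamma(\theta_1) = \ecp\circ\Phi(\theta)$ at $\theta_2\neq 0$, inverts $D\Phi$, and takes the limit $\theta_2\to 0$, obtaining the continuous extension of $D\ecp$ onto $S$ and the formula $D\ecp = P$ in one stroke, without appealing to Theorem~\ref{theo:CPbyGeo}. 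What your version buys is economy: the $C^1$-regularity falls out of a single IFT application instead of a limiting argument, and the Jacobian formula is free once you notice the pre-images are normal discs. What the paper's version buys is that the same differentiate-and-invert iteration immediately yields higher-order smoothness of $\ecp$ (as exploited in Section~\ref{subsubsect:RemEcp}), and the codimension-one special case gives a clean, self-contained warm-up. One point to be careful about in your write-up is the claim that $y$ is the unique global minimiser of $|x-\cdot|$ over $S$ for $x=\Phi(y,\lambda)$; you sketch this (local strict minimum by second-fundamental-form considerations, far-away points excluded by shrinking the tube), which is fine, but it is the same nontrivial fact the paper relies on through the identity $\gamma = \ecp\circ\Phi$ --- neither proof avoids it, and both lean on the tubular-neighborhood hypothesis in exactly this place.
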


\begin{proof}
	The proof is instructive in the special case of $\codim S = 1$ and we begin with that case:
	The Euclidean closest point function $\ecp: B(S) \to S$ is a continuous retraction and can be written in terms of the Euclidean distance map $d$ as
	\begin{equation}\label{eqn:ecpDef}
		\ecp(x) = x - d(x) \cdot \nabla d(x) \;.
	\end{equation}
	Because $\codim S =1$, we can replace $d$ with a signed Euclidean distance $\sd$ which on $B(S)$ is a classical solution of the Eikonal equation,
	where $\sd$ is as smooth as $S$ and where $\nabla \sd(y) = N(y)$, for $y \in S$, is one of the two possible choices of the surface normal.
	So $\ecp$ is continuously differentiable with Jacobian
	\[
		D \ecp(x) = I - \nabla \sd(x) \cdot \nabla \sd(x)^T - \sd(x) \cdot D^2 \sd(x) \;,
	\]
	and, as $\sd$ vanishes on $S$, we get
	\[
		D \ecp(y) = I - N(y) \cdot N(y)^T = P(y) \; , \quad y \in S \;,
	\]
	which proves the statement in this case.

	If $S$ is of higher codimension, we cannot use this argument since $d$ is not differentiable on the surface $S$.
	But it is continuously differentiable off the surface and so is $\ecp$ by \eqref{eqn:ecpDef}. We prove now that $D \ecp$ extends continuously onto $S$ and equals the projector $P$ there.

	Let $\gamma: U \subset \R^k \to \Omega_S$, $\theta_1 \to \gamma(\theta_1)$  be a regular parametrization of
	a surface patch $\Omega_S \subset S$, ($k = \dim S$). Then, by adding the normal space
	\begin{equation}\label{eqn:InvEcp}
		\begin{aligned}
			&\Phi(\theta) = \gamma(\theta_1) + N \circ \gamma(\theta_1)  \cdot \theta_2 \;, \\
			&\\
			&\Phi: \Omega \subset \R^n \to B(S) \;, \quad \theta=(\theta_1, \theta_2) \to \Phi(\theta) \;,
		\end{aligned}
	\end{equation}
	we get a coordinate system on a corresponding subset of $B(S)$. $N(y) = (N_1(y)| \ldots | N_{n-k}(y))$ is a matrix formed of the
	normal space basis and $\theta_2 \in \R^{n-k}$. Now, we have
	\begin{equation}\label{eqn:RetEcp}
		\gamma(\theta_1) = \ecp \circ \Phi(\theta) \;.
	\end{equation}
	As long as $\theta_2 \neq 0$ (i.e., off the surface) we can differentiate \eqref{eqn:RetEcp} using the chain rule:
	\begin{equation}\label{eqn:RetEcpDiff}
		(D_{\theta_1} \gamma \; | \; 0 ) = D\ecp \circ \Phi \cdot D_{\theta} \Phi = D\ecp \circ \Phi \cdot ( D_{\theta_1} \gamma +  D_{\theta_1}(N \circ \gamma) \cdot \theta_2 \; | \; N \circ \gamma) \;.
	\end{equation}
	The second factor of the right hand side extends onto $S$ (i.e., $\theta_2=0$ is admissible) and is invertible, the left hand side is defined for $\theta_2=0$ anyway.
	After inverting, we send $\theta_2$ to zero and so we have ($\Phi(\theta_1,0) = \gamma(\theta_1)$)
	\[
		D\ecp \circ \gamma= (D_{\theta_1} \gamma \; | \; 0 ) \cdot ( D_{\theta_1} \gamma \; | \; N \circ \gamma)^{-1} \;.
	\]
	As $D_{\theta_1} \gamma$ and $N \circ \gamma$ are orthogonal, i.e., $D_{\theta_1} \gamma^T \cdot N \circ \gamma = 0$ and $N \circ \gamma^T \cdot  D_{\theta_1} \gamma = 0$,
	we can write the inverse in terms of the pseudo inverses of the sub-matrices
	\[
		( D_{\theta_1} \gamma \; | \; N \circ \gamma)^{-1} =
		\left( \begin{matrix}
					D_{\theta_1} \gamma^{\dagger} \\
					\hline
					N \circ \gamma^{\dagger}
					\end{matrix} \right)
	\]
	and so
	\[
		D\ecp \circ \gamma = (D_{\theta_1} \gamma \; | \; 0 ) \cdot ( D_{\theta_1} \gamma \; | \; N \circ \gamma)^{-1} = (D_{\theta_1} \gamma \; | \; 0 ) \cdot
		\left( \begin{matrix}
					D_{\theta_1} \gamma^{\dagger} \\
					\hline
					N \circ \gamma^{\dagger}
					\end{matrix} \right)
		= D_{\theta_1} \gamma \cdot D_{\theta_1} \gamma^{\dagger} = P \circ \gamma \;.
	\]
	In the latter equality we have used that the outer product of a full column-rank matrix $A$ and its pseudo inverse gives the projector onto the image of $A$
	which in our case is the tangent space. Finally we get the assertion:
	\[
		D\ecp \circ \gamma= P \circ \gamma \qquad \Leftrightarrow \qquad D \ecp(y) = P(y) \;, \quad y \in S \;.
	\]
	\hfill
\end{proof}

\paragraph{Remark}
As a consequence of Theorem~\ref{theo:ECPisCP} we see that the function
\[
	x-\ecp(x) = d(x) \cdot \nabla d(x) = \nabla \left( \frac{d(x)^2}{2} \right)
\]
is continuously differentiable. The article \cite{GF:VDF} introduces $v(x) := x-\ecp(x)$ as the \emph{vector distance function} and refers to the article \cite{AS:MCMAC}
for some of its features. In particular the authors of \cite{AS:MCMAC} prove that the squared distance function is differentiable and show that
the Hessian $D^2 \eta(y)$ of the function $\eta := d^2 /2$ when evaluated at a surface point $y \in S$ is exactly the projector onto
the normal space $\Norm_y S$. Here, we have an alternative proof that $\eta$ (and thereby the squared distance function) is differentiable twice
with $I-P(y) = Dv(y) = D^2 \eta(y)$ on $S$ which is the projector onto $\Norm_y S$.

\subsubsection{Additional Smoothness of the Euclidean Closest Point Function}\label{subsubsect:RemEcp}
Theorem~\ref{theo:ECPisCP} implies only $C^1$-smoothness of $\ecp$ but taking a closer look at $\Phi$ as defined in \eqref{eqn:InvEcp} we see that $\Phi$
is as smooth as $N$ (which typically is one order less smooth than the surface $S$).  Note also that $\ecp$ inherits its degree of smoothness from $N$.
For example, given a $C^3$-smooth surface $S$, we have $\Phi$ $C^2$-smooth. So we can differentiate \eqref{eqn:RetEcpDiff} and solve for $D^2 \ecp \circ\, \Phi$, again by appealing to the
non-singularity of $D \Phi$. The limit $\theta_2 \to 0$ exists here again which shows $D^2 \ecp$ has a continuous extension onto $S$, hence $\ecp$ is $C^2$-smooth for a $C^3$-smooth surface.
Applying these ideas repeatedly proves that $\ecp$ is $C^{l}$-smooth for a $C^{l+1}$-smooth surface.

\subsection{Derivatives of Higher Order}\label{subsect:DiffHOrder}
For $C^l$-smooth surface functions we can recover surface differential operators of higher order by applying Theorem~\ref{theo:cpopDiff}
(or the Gradient and Divergence Principles~\ref{cor:PrinGrad} and~\ref{cor:PrinDiv}) several times.
For example the surface Hessian $D_S^2 u$ of a scalar surface function $u \in C^l(S)$ with $l \geq 2$ can be calculated by combining Theorem~\ref{theo:cpopDiff} and the Gradient Principle~\ref{cor:PrinGrad}:
\begin{align*}
	D_S^2 u(y) = D_S \nabla_S u(y) &= D [ \nabla_S u \circ \cp ](y) = D [ \nabla [u \circ \cp] \circ \cp ](y) \;, \quad y \in S \;.
\end{align*}
Interestingly, higher order derivatives do not require more smoothness of the closest point function; $C^1$ is sufficient.
To see this expand $\nabla [u \circ \cp]$ in terms of an arbitrary extension $u_E$
\begin{equation}\label{eqn:PrinGrad}
	\nabla [u \circ \cp](x) = D\cp(x)^T \cdot \nabla u_E \circ \cp(x) \;.
\end{equation}
The clue here is that the extension of $D\cp$ is the extension of the projector $P$:
\[
	D\cp(y) = P(y) \;, \; y \in S \qquad \Rightarrow \qquad D\cp \circ \cp(x) = P \circ \cp(x),
\]
and thus a second extension of \eqref{eqn:PrinGrad} gives
\begin{align}\label{eqn:GradReext}
	\nabla [u \circ \cp] \circ \cp(x) & = P \circ \cp(x) \cdot \nabla u_E \circ \cp(x).
\end{align}
By the assumption at the end of Section~\ref{subsect:surf}, the surface must be $C^{l+1}$-smooth and thus the surface identity map $\id_S$ is $C^{l+1}$-smooth.
By \eqref{eqn:PisDid} $P = D_S \id_S$ is $C^{l}$-smooth.
Therefore we can handle differential operators of higher order by
iterating Theorem~\ref{theo:cpopDiff} even though $\cp$ is only
$C^1$-smooth because we never differentiate $\cp$ more than once.

In the next section, we will discuss special second order differential operators where we can drop the second extension.
In that case we differentiate \eqref{eqn:PrinGrad} instead of \eqref{eqn:GradReext} and hence we need a second derivative of $\cp$.
There we will require $\cp$ to be a $C^2$-smooth closest point function.

\section{Surface Intrinsic Diffusion Operators}\label{sect:diffusion}
In this section we discuss the treatment of
diffusion operators, that is, second order differential operators of the form
\[
	\diver_S ( A(y) \nabla_S u),
\]
in terms of the closest point calculus.  Of course, by combining the Gradient and Divergence Principles~\ref{cor:PrinGrad} and~\ref{cor:PrinDiv},
we can set up the  diffusion operator as follows
\begin{align*}
	&  w(x) := A \circ \cp(x) \; \nabla [u \circ \cp](x)\;, \quad x \in B(S), \\
	\Rightarrow \quad &  w(y) = A(y) \nabla_S u(y) \;, \quad y \in S\\
	\Rightarrow \quad &  \diver [w \circ \cp](y) = \diver_S w(y) = \diver_S ( A(y) \nabla_S u(y)) \;.
\end{align*}
In this set-up, we have a second extension $w \circ \cp$ in the last step.
If the surface vector field $v = A(y) \nabla_S u$ is tangent to the surface, we call the operator a \emph{surface intrinsic diffusion operator}
(these are relevant regarding the physical modeling of surface processes).  The subject of this section is that we can drop the second extension
in many cases (depending on $A$ and $\cp$) given the tangency of $v$.
The key to this result is the following lemma on the divergence of vector fields which are tangential on the surface while
tangency is allowed to be mildly perturbed off the surface.

\begin{lem}{(Divergence of tangential fields)}\label{lem:div}
	Let $S$ be a smooth surface, and let $\gamma: U \subset \R^k \to \Omega_S$ ($k = \dim S$) be a regular parametrization of a generic surface patch $\Omega_S \subset S$.
	Let $\Omega$ be a corresponding open subset of the embedding space $\R^n$ such that $\Omega \cap S = \Omega_S$.
	Let $\Phi: U \times V \to \Omega$ ($V \subset \R^{n-k}$) be a coordinate system  on $\Omega$ that satisfies
	\[
		\Phi(\theta_1,\theta_2) = \gamma(\theta_1) + N \circ \gamma(\theta_1) \cdot \theta_2 + \Order( |\theta_2|^2 ) \;, \; \text{as} \quad \theta_2 \to 0 \;.
	\]
	Here the columns of the $n \times (n-k)$ matrix $N(y) := (N_1(y) | \ldots | N_{n-k}(y))$, $y \in S$, give some basis of the normal space  $\Norm_y S$.
	Let $v: B(S) \to \R^n$ be a continuously differentiable vector field and let
	\[
		\bar{v}(\theta_1,\theta_2) = v \circ \Phi(\theta_1,\theta_2) = D_{\theta_1} \gamma(\theta_1) \cdot \tilde{v}(\theta_1,\theta_2) + N \circ \gamma(\theta_1) \cdot \eta(\theta_1,\theta_2)
	\]
	be the representation of this vector field on $\Omega$ in $\theta = (\theta_1,\theta_2)$-variables with a decomposition into the two
	components ($\tilde{v} \in \R^k$, $\eta \in \R^{n-k}$) which are tangential and normal to the surface $S$.

	If the coefficient $\eta$ of the normal part satisfies
	\[
		\eta(\theta_1,0) = 0 \quad \text{and} \quad \trace D_{\theta_2} \eta(\theta_1,0) = 0 \;,
	\]
	then (as the surface patch is generic)
	\[
		\diver v(y) = \diver_S v(y) \;, \quad y \in S \;.
	\]
	The condition $\eta(\theta_1,0) = 0$ means that the restriction $v|_S$ is a tangential surface vector field.
\end{lem}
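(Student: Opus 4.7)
The plan is to express both the Cartesian divergence $\diver v(y)$ and the surface divergence $\diver_S v(y)$ at a surface point $y = \gamma(\theta_1)$ in the frame $A(\theta_1) := (T(\theta_1) \,|\, N(\gamma(\theta_1)))$ built from the tangent basis $T := D_{\theta_1}\gamma$ and the given normal basis $N$, and show that their difference reduces to $\trace D_{\theta_2}\eta(\theta_1,0)$, which vanishes by hypothesis. The first step is to compute $D\Phi$ on the surface: writing $\Phi(\theta_1,\theta_2) = \gamma(\theta_1) + N(\gamma(\theta_1))\theta_2 + R(\theta_1,\theta_2)$ with $R = \Order(|\theta_2|^2)$, both $R$ and its partial derivatives $D_{\theta_1}R$, $D_{\theta_2}R$ vanish at $\theta_2 = 0$, so $D\Phi(\theta_1,0) = A(\theta_1)$. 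The chain rule applied to $\bar{v} = v\circ\Phi$ then yields $Dv(y) = D\bar{v}(\theta_1,0)\, A(\theta_1)^{-1}$.

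Next I would use the algebraic observation that, since the columns of $T$ and $N$ span the mutually orthogonal subspaces $\Tang_y S$ and $\Norm_y S$, the orthogonal projector decomposes as $P(y) = A E A^{-1}$, where $E \in \R^{n\times n}$ is the block matrix with $I_k$ in the upper-left block and zeros elsewhere. By cyclicity of the trace,
\begin{align*}
	\diver v(y) &= \trace(Dv(y)) = \trace\bigl(A^{-1} D\bar{v}(\theta_1,0)\bigr), \\
	\diver_S v(y) &= \trace(Dv(y) P(y)) = \trace\bigl(A^{-1} D\bar{v}(\theta_1,0)\, E\bigr).
\end{align*}
The difference $\diver v - \diver_S v$ is therefore the trace of the lower-right $(n-k)\times(n-k)$ block of $A^{-1} D\bar{v}(\theta_1,0)$. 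Writing $A^{-1}$ in block-row form with rows $T^\dagger$ and $N^\dagger$---valid because $T^T N = 0$ forces $T^\dagger N = 0$ and $N^\dagger T = 0$---this block equals $N^\dagger D_{\theta_2}\bar{v}(\theta_1,0)$.

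Finally, since $T$ and $N$ depend only on $\theta_1$, differentiating the decomposition $\bar{v} = T\tilde{v} + N\eta$ in $\theta_2$ gives $D_{\theta_2}\bar{v} = T\, D_{\theta_2}\tilde{v} + N\, D_{\theta_2}\eta$; left-multiplication by $N^\dagger$ annihilates the $T$ term and reduces the $N$ term to $D_{\theta_2}\eta(\theta_1,0)$, whose trace vanishes by hypothesis. I expect the main obstacle to be setting up the frame algebra carefully---verifying the identity $P = A E A^{-1}$ and the pseudo-inverse block-row structure of $A^{-1}$---and confirming that the $\Order(|\theta_2|^2)$ tail of $\Phi$ truly contributes nothing to $D\Phi$ at $\theta_2 = 0$, so that the reduction to the linear model of $\Phi$ is exact on $S$. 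The hypothesis $\eta(\theta_1,0) = 0$ is not used explicitly in this trace computation but records the geometric fact that $v|_S$ is tangential, giving the conclusion its surface-intrinsic meaning.
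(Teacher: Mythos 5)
Your proof is correct and takes a genuinely different route from the paper's. The paper starts from the parametric formula for the surface divergence of a \emph{tangential} field, $\diver_S v \circ \gamma = \tfrac{1}{\sqrt{g}}\,\diver_{\theta_1}(\tilde v\sqrt g)$ with $g=\det G$ and $G = D_{\theta_1}\gamma^T D_{\theta_1}\gamma$, then pushes $\diver v\circ\gamma$ through the chain rule with $D\Phi$, and matches the two expressions by the identity $\trace\bigl(\pd_l D_{\theta_1}\gamma\cdot D_{\theta_1}\gamma^\dagger\bigr)=\pd_l g/(2g)$, which it establishes via the ODE for $G^{-1}$. You instead stay with the ambient trace-based definition $\diver_S v(y)=\trace(Dv(y)P(y))$ from Definition~2.2, write $P(y)=A E A^{-1}$ with $A=(T\,|\,N)$, and reduce $\diver v-\diver_S v$ to $\trace\bigl(N^\dagger D_{\theta_2}\bar v(\theta_1,0)\bigr)=\trace D_{\theta_2}\eta(\theta_1,0)$, which vanishes by hypothesis. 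This sidesteps the metric-tensor determinant calculus entirely and leans only on the block structure of $(T\,|\,N)^{-1}$ and cyclicity of the trace. A consequence, which you observe, is that your computation never uses $\eta(\theta_1,0)=0$: with the trace-based $\diver_S$, the identity already follows from $\trace D_{\theta_2}\eta(\theta_1,0)=0$ alone. The tangency hypothesis is what the paper needs to invoke its parametric formula (which only applies to tangential fields), and it is also what makes $\diver_S v$ coincide with the intrinsic $\diver_S^*$; your argument correctly records that it plays a purely interpretational role here. Both proofs are sound; yours is shorter and avoids auxiliary identities that the paper derives from scratch.
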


\begin{proof}
	We show $\diver_S v \circ \gamma = \diver v \circ \gamma$. For a tangential field the surface divergence in terms of a parametrization $\gamma$ is given by
	\begin{equation}\label{eqn:divSV}
		\begin{aligned}
			\diver_S v \circ \gamma &= \frac{1}{\sqrt{g}} \diver_{\theta_1} (\tilde{v}(\theta_1,0)\sqrt{g})
			= \diver_{\theta_1} (\tilde{v}(\theta_1,0)) + \sum\limits_{l=1}^{k} \frac{\pd_l g}{2 g} \tilde{v}_l(\theta_1,0) \;.
		\end{aligned}
	\end{equation}
	where $g=\det G$ and $G = D_{\theta_1} \gamma^T D_{\theta_1} \gamma$ is the metric tensor induced by $\gamma$.

	Next, we turn to $\diver v \circ \gamma$. In a first step, we obtain
	\begin{align*}
		\diver v \circ \gamma &= \trace Dv \circ \gamma = \trace Dv \circ \Phi(\theta_1,0) = \trace \left( D_\theta \bar{v}(\theta_1,0) \cdot D_\theta \Phi(\theta_1,0)^{-1} \right) \;.
	\end{align*}
	The derivative of $\bar{v}$ is
	\begin{align*}
		D_\theta \bar{v}(\theta_1,\theta_2)
		&= \sum\limits_{l=1}^k  \pd_l \gamma \nabla_\theta \tilde{v}_l^T
			+ \sum\limits_{l=1}^{n-k}  N_l \circ \gamma \nabla_\theta \eta_l^T
			+ \sum\limits_{l=1}^k  \tilde{v}_l \; \pd_l D_\theta \gamma
			+ \sum\limits_{l=1}^{n-k}  \eta_l \; D_\theta (N_l \circ \gamma)  \;. \\
		D_\theta \bar{v}(\theta_1,0)
		&= \sum\limits_{l=1}^k  \pd_l \gamma \nabla_\theta \tilde{v}_l^T
			+ \sum\limits_{l=1}^{n-k}  N_l \circ \gamma \nabla_\theta \eta_l^T
			+ \sum\limits_{l=1}^k  \tilde{v}_l \; \pd_l D_\theta \gamma
	\end{align*}
	since $\eta(\theta_1,0) = 0$. By our assumptions on $\Phi$ we have
	\[
		D_\theta \Phi(\theta_1,0) = ( D_{\theta_1} \gamma \; | \; N \circ \gamma) \;, \quad D_\theta \Phi(\theta_1,0)^{-1} =
		( D_{\theta_1} \gamma \; | \; N \circ \gamma)^{-1} =
		\left( \begin{matrix}
					D_{\theta_1} \gamma^{\dagger} \\
					\hline
					N \circ \gamma^{\dagger}
					\end{matrix} \right) \;.
	\]
	Now, we write the divergence as
	\begin{align*}
		\diver v \circ \gamma
		&= \sum\limits_{l=1}^k \trace\left( \pd_l \gamma \nabla_\theta \tilde{v}_l^T \; \cdot D_\theta \Phi(\theta_1,0)^{-1}\right)
			+ \sum\limits_{l=1}^{n-k} \trace\left( N_l \circ \gamma \nabla_\theta \eta_l^T \; \cdot D_\theta \Phi(\theta_1,0)^{-1}\right) \\
		&	+ \sum\limits_{l=1}^k \trace\left( \pd_l D_\theta \gamma \; \cdot D_\theta \Phi(\theta_1,0)^{-1}\right) \tilde{v}_l \;.
	\end{align*}
	Using the rules of the $\trace$ operator we get
	\begin{align*}
		\diver v \circ \gamma &
		= \trace \left( D_\theta \left( \begin{matrix} \tilde{v} \\ \eta \end{matrix}\right) D_\theta \Phi(\theta_1,0)^{-1} ( D_{\theta_1} \gamma \; | \; N \circ \gamma)  \right)
		+ \sum\limits_{l=1}^k \trace\left( \pd_l D_\theta \gamma \; \cdot D_\theta \Phi(\theta_1,0)^{-1}\right) \tilde{v}_l \;.
	\end{align*}
	Next, we use $D_\theta \gamma = ( D_{\theta_1} \gamma \; | \; 0 )$ and the second assumption $\trace D_{\theta_2}\eta(\theta_1,0) = 0$ to get
	\begin{equation}\label{eqn:divV}
		\begin{aligned}
			\diver v \circ \gamma &
			= \trace \left( D_\theta \left( \begin{matrix} \tilde{v} \\ \eta \end{matrix}\right) \right)
			+ \sum\limits_{l=1}^k \trace\left( \pd_l D_{\theta_1} \gamma \cdot D_{\theta_1} \gamma^{\dagger} \right) \tilde{v}_l \\
			&= \trace \left( D_{\theta_1} \tilde{v} \right)
			+ \sum\limits_{l=1}^k \trace\left( \pd_l D_{\theta_1} \gamma \cdot D_{\theta_1} \gamma^{\dagger} \right) \tilde{v}_l \\
			&= \diver_{\theta_1} (\tilde{v}(\theta_1,0)) + \sum\limits_{l=1}^k \trace\left( \pd_l D_{\theta_1} \gamma \cdot D_{\theta_1} \gamma^{\dagger} \right) \tilde{v}_l
		\end{aligned}
	\end{equation}
	The final step is to show that $\trace\left( \pd_l D_{\theta_1} \gamma \cdot D_{\theta_1} \gamma^{\dagger} \right) = \frac{\pd_l g}{2 g}$.
	Partial differentiation of the definition of $G = D_{\theta_1} \gamma^T D_{\theta_1} \gamma$ and using the definition
	of $D_{\theta_1} \gamma^{\dagger} = G^{-1} D_{\theta_1} \gamma^T$ yields
	\begin{equation}\label{eqn:pseudoinv}
		\trace\left( \pd_l D_{\theta_1} \gamma \cdot D_{\theta_1} \gamma^{\dagger} \right) = \frac{1}{2} \trace(G^{-1} \pd_l G)
	\end{equation}
	While partial differentiation of the equation $G^{-1}G=I$ results in the (matrix) ordinary differential equation (ODE)
	\begin{equation*}\label{eqn:metricT}
		\pd_l \left( G^{-1} \right) = -(G^{-1} \pd_l G) \; G^{-1} \;,
	\end{equation*}
	which implies $\pd_l \det G^{-1} = - \trace(G^{-1} \pd_l G) \det G^{-1}$ and
	hence $g=\det G$ satisfies
	\[
		-\frac{1}{g^2} \pd_l g = \pd_l \frac{1}{g}= \pd_l \det G^{-1} = -\trace(G^{-1} \pd_l G) \det G^{-1} = - \trace(G^{-1} \pd_l G) \frac{1}{g} \;.
	\]
	This result combined with \eqref{eqn:pseudoinv} completes the proof: we replace the last term in the last equality of \eqref{eqn:divV} with
	\[
		\trace\left( \pd_l D_{\theta_1} \gamma \cdot D_{\theta_1} \gamma^{\dagger} \right) = \frac{1}{2} \trace(G^{-1} \pd_l G) = \frac{\pd_l g}{2 g} \;,
	\]
	and compare with \eqref{eqn:divSV} to see that $\diver_S v \circ \gamma = \diver v \circ \gamma$.
	\hfill
\end{proof}

\subsection{Diffusion Operators using Fewer Extensions}
Armed with Lemma \ref{lem:div} we obtain a closest point calculus involving fewer extensions, for all surface intrinsic diffusion operators.
There are two situations depending on how much ``help'' we get from the diffusion tensor $A$.
First, we look at the situation where $A(y)$ maps all vectors $\xi \in \R^n$ to tangent vectors.

\begin{theo}\label{theo:tensor}
	Let $A:S \to \R^{n \times n}$ be a tangential diffusion tensor field, that maps all vectors to tangent vectors, i.e.,
	$\forall y \in S$ we have $\forall \xi \in \R^n \; \Rightarrow \; A(y) \xi \in \Tang_y S$.
	Then the corresponding surface diffusion operator can be written in terms of a closest point extension as
	\[
		\diver_S \left( A \nabla_S u\right)(y) = \diver \left( A \circ \cp \nabla[u \circ \cp] \right)(y) \;,\quad y \in S \;.
	\]
	A re-extension of $\nabla[u \circ \cp]$ is not necessary and the result is true for all $C^2$-smooth closest point functions.
\end{theo}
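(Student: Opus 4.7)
The strategy is to verify the hypotheses of Lemma~\ref{lem:div} for the ambient vector field
\[
v(x) := A \circ \cp(x) \cdot \nabla[u \circ \cp](x), \qquad x \in B(S).
\]
First I would note that $v$ is continuously differentiable because $\nabla[u \circ \cp](x) = D\cp(x)^T \cdot \nabla u_E \circ \cp(x)$ involves $D\cp$, which is $C^1$ precisely when $\cp$ is $C^2$ (this is where the $C^2$-hypothesis is used the first time). Moreover, by the Gradient Principle~\ref{cor:PrinGrad} and $\cp|_S = \id_S$ we have $v(y) = A(y) \nabla_S u(y)$ for $y \in S$, so $v$ is a legitimate ambient extension of the surface vector field $A \nabla_S u$. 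Since the value of $\diver_S$ is independent of the chosen extension (Definition~\ref{def:diffS}), it suffices to establish $\diver v(y) = \diver_S v(y)$ for $y \in S$.

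The crucial step is the choice of the coordinate system $\Phi$ in Lemma~\ref{lem:div}: I would take $\Phi$ so that its $\theta_2$-slices trace out the fibers of $\cp$, i.e.\ $\cp \circ \Phi(\theta_1, \theta_2) = \gamma(\theta_1)$ identically. Such a $\Phi$ exists locally because (i) by Lemma~\ref{lem:PreimageSmooth} each fiber $\cp^{-1}(\gamma(\theta_1))$ is a $C^1$-manifold of dimension $n-k$ passing through $\gamma(\theta_1)$, (ii) the fibers are pairwise disjoint and depend smoothly on $\theta_1$, and (iii) by the geometric characterization of Theorem~\ref{theo:CPbyGeo} the tangent space of the fiber at $\gamma(\theta_1)$ is $\Norm_{\gamma(\theta_1)} S$. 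Parametrising each fiber so that $\theta_2 \mapsto \Phi(\theta_1, \theta_2)$ starts in the prescribed normal direction yields the required asymptotic $\Phi(\theta_1,\theta_2) = \gamma(\theta_1) + N \circ \gamma(\theta_1) \cdot \theta_2 + \Order(|\theta_2|^2)$.

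With this aligned coordinate system the composition collapses:
\[
\bar v(\theta_1,\theta_2) = v \circ \Phi(\theta_1,\theta_2) = A(\gamma(\theta_1)) \cdot \nabla[u \circ \cp] \circ \Phi(\theta_1,\theta_2),
\]
and the tangentiality hypothesis on $A$ forces the right-hand side to lie in $\Tang_{\gamma(\theta_1)} S$ for \emph{every} $\theta_2$. Decomposing $\bar v$ in the basis $(D_{\theta_1}\gamma\,|\,N \circ \gamma)$ attached to $\gamma(\theta_1)$ therefore produces $\eta(\theta_1,\theta_2) \equiv 0$ in a neighbourhood of $\theta_2 = 0$. In particular both assumptions of Lemma~\ref{lem:div}, namely $\eta(\theta_1,0) = 0$ and $\trace D_{\theta_2}\eta(\theta_1,0) = 0$, are satisfied trivially, and the lemma delivers $\diver v(y) = \diver_S v(y)$ on $S$, which together with the first paragraph completes the proof.

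The main obstacle, and the only non-cosmetic part of the argument, is producing the $\cp$-aligned coordinate system with the correct first-order normal behaviour; once it is available, the tangentiality of $A$ takes care of everything else. This is precisely where the geometric content of Theorem~\ref{theo:CPbyGeo} is indispensable, and it also explains the $C^2$-requirement on $\cp$: one degree of smoothness goes into making $v$ continuously differentiable, the second into ensuring that the fibers of $\cp$ are regular enough to carry a $C^1$ coordinate system with the required asymptotic.
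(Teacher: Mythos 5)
Your proof is correct and follows essentially the same route as the paper: define $v := A \circ \cp \, \nabla[u\circ\cp]$, build the coordinate system $\Phi$ by parametrizing the fibers $\cp^{-1}(\gamma(\theta_1))$ (using Theorem~\ref{theo:CPbyGeo} to get the normal first-order behaviour), observe that tangentiality of $A$ forces $\eta \equiv 0$ in the decomposition of $\bar{v}$, and invoke Lemma~\ref{lem:div}. The only small inaccuracy is in your closing remark about the $C^2$-hypothesis: the paper needs it only so that $v$ (which involves $D\cp$) is $C^1$ as required by Lemma~\ref{lem:div}; the fibers of a merely $C^1$ retraction are already $C^1$-manifolds by Lemma~\ref{lem:PreimageSmooth}, so the ``second degree of smoothness'' is not spent there.
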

\begin{proof}
	As $A$ maps all vectors to tangent vectors the field $v := A \circ \cp \nabla[u \circ \cp]$ is tangential to $S$ for all $x$.
	Now, we want to apply Lemma \ref{lem:div}, that means we have to define the coordinate system $\Phi$ and the field $\bar{v}(\theta_1,\theta_2)$.
	We reuse the parametrization $\gamma$ of a generic surface patch from Lemma \ref{lem:div}.
	For a fixed value of $\theta_1$ and the corresponding surface point $\gamma(\theta_1)$ we simply parametrize (with $\theta_2$) the pre-image $\cp^{-1}( \gamma(\theta_1) )$
	in order to get $\Phi(\theta_1,\theta_2)$. Since by Theorem \ref{theo:CPbyGeo} the pre-image $\cp^{-1}( \gamma(\theta_1) )$ intersects $S$ orthogonally, we can organize
	its parametrization in such a way that $\Phi$ satisfies
	\[
		\Phi(\theta_1,\theta_2) = \gamma(\theta_1) + N \circ \gamma(\theta_1) \cdot \theta_2 + \Order( |\theta_2|^2 ) \;, \; \text{as} \quad \theta_2 \to 0 \;.
	\]
	As our particular vector field $v$ is tangential to $S$ for all $x$ in a band $B(S)$ around $S$, the corresponding $\bar{v}$ from Lemma \ref{lem:div} takes the form
	\[
		\bar{v}(\theta_1,\theta_2) = v \circ \Phi(\theta_1,\theta_2) = D_{\theta_1} \gamma(\theta_1) \cdot \tilde{v}(\theta_1,\theta_2)
	\]
	with $\eta$ equal to zero. Hence, by Lemma \ref{lem:div} we conclude the first equality of
	\[
		\diver( A \circ \cp \nabla [u \circ \cp] )(y) = \diver_S( A \circ \cp \nabla [u \circ \cp] )(y) = \diver_S( A \nabla_S u )(y) \;, \quad y \in S \;,
	\]
	while the second equality is because
	we have $\cp|_S = \id_S$ and $ \left.\nabla [u \circ \cp]\right|_S = \nabla_S u$.
	\hfill
\end{proof}
\medskip

Next, we look at the situation where $A(y)$ maps only tangent vectors $\xi \in \Tang_y S$ to tangent vectors.
This case is particularly interesting since---by considering diffusion tensors of the form
\begin{equation}\label{eqn:ScalarDiffCoeff}
	A(y) = a(y) \cdot I \;,
\end{equation}
where $I \in \R^{n \times n}$ is the identity matrix on the embedding space---it covers the special case where we are given a scalar diffusion coefficient $a(y) \in \R$.
It is clear that the diffusion tensor in \eqref{eqn:ScalarDiffCoeff} can only map tangent vectors to tangent vectors
(this is weaker than the requirement in Theorem~\ref{theo:tensor}).

\begin{theo}\label{theo:tensor2}
	Let $A:S \to \R^{n \times n}$ be a tangential diffusion tensor field, that maps only tangent vectors to tangent vectors, i.e.,
	$\forall y \in S$ we have $\forall \xi \in \Tang_y S \; \Rightarrow \; A(y) \xi \in \Tang_y S$.
	If $\cp$ is a $C^2$-smooth closest point function such that the transpose of its Jacobian maps tangent vectors to tangent vectors, i.e.,
	\[
		\forall x \in B(S) \quad \text{we have:} \quad \forall \xi \in \Tang_{\cp(x)} S \; \Rightarrow \; D \cp(x)^T \xi \in \Tang_{\cp(x)} S \;.
	\]
	then the corresponding surface diffusion operator can be written in terms of a closest point extension as
	\[
		\diver_S \left( A \nabla_S u\right)(y) = \diver ( A \circ \cp \nabla[u \circ \cp] )(y) \;,\quad y \in S \;.
	\]
	A re-extension of $\nabla[u \circ \cp]$ is not necessary.
\end{theo}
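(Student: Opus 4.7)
The plan is to follow the proof of Theorem~\ref{theo:tensor} and apply Lemma~\ref{lem:div} to the vector field $v(x) := A\circ\cp(x)\,\nabla[u\circ\cp](x)$. Under the weakened hypothesis on $A$, the global tangency of $v$ that drove the previous proof no longer holds automatically; the new hypothesis on $D\cp^T$ is precisely what is needed to restore it. Once $v$ is known to be tangent to $S$ at the foot point $\cp(x)$ for every $x \in B(S)$, the remainder of the argument closely parallels Theorem~\ref{theo:tensor}.

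The only genuinely new step is to verify that
\[
\nabla[u\circ\cp](x) \;\in\; \Tang_{\cp(x)} S \qquad \text{for every } x\in B(S),
\]
not merely on $S$. To this end I would use the fact that $u\circ\cp$ is itself a valid extension of $u$ (since $\cp$ is $C^2$) and apply the chain rule with this particular choice $u_E := u\circ\cp$:
\[
\nabla[u\circ\cp](x) \;=\; D\cp(x)^T\,\nabla u_E(\cp(x)) \;=\; D\cp(x)^T\,\nabla[u\circ\cp](\cp(x)) \;=\; D\cp(x)^T\,\nabla_S u(\cp(x)),
\]
where the last equality is the Gradient Principle~\ref{cor:PrinGrad} applied at the surface point $\cp(x)$. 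Because $\nabla_S u(\cp(x))\in \Tang_{\cp(x)}S$ by definition, the assumed tangent-preserving property of $D\cp(x)^T$ then forces $\nabla[u\circ\cp](x) \in \Tang_{\cp(x)}S$. The hypothesis on $A$ finally gives $v(x) = A(\cp(x))\,\nabla[u\circ\cp](x)\in\Tang_{\cp(x)}S$ for all $x\in B(S)$.

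From here the proof is essentially identical to that of Theorem~\ref{theo:tensor}: I would reuse the coordinate system $\Phi(\theta_1,\theta_2)$ built by parametrizing each fibre $\cp^{-1}(\gamma(\theta_1))$ with $\theta_2$, with Theorem~\ref{theo:CPbyGeo} supplying the required expansion $\Phi(\theta_1,\theta_2) = \gamma(\theta_1) + N\circ\gamma(\theta_1)\cdot\theta_2 + \Order(|\theta_2|^2)$ as $\theta_2 \to 0$. Since $v\circ\Phi(\theta_1,\theta_2) \in \Tang_{\gamma(\theta_1)}S$ for all $(\theta_1,\theta_2)$, the normal coefficient $\eta$ in the decomposition of Lemma~\ref{lem:div} is identically zero, so both hypotheses of that lemma are trivially satisfied. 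Lemma~\ref{lem:div} then yields $\diver v(y) = \diver_S v(y)$ on $S$, and $v|_S = A\nabla_S u$ finishes the claim.

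The principal obstacle is the pointwise tangency step above: since $A$ no longer annihilates normal vectors, an arbitrary extension in the chain rule is no longer enough, because $D\cp(x)^T$ might send the normal part of $\nabla u_E(\cp(x))$ out of $\Tang_{\cp(x)}S$. The remedy is to choose the closest point extension $u_E = u\circ\cp$, which---by the Gradient Principle---projects away the normal component before $D\cp^T$ acts, after which the assumed tangent-preserving property of $D\cp^T$ handles the rest.
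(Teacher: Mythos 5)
Your proof is correct and follows essentially the same route as the paper. The one small difference is how you derive the key identity $\nabla[u\circ\cp](x)=D\cp(x)^T\nabla_S u(\cp(x))$: you exploit $\cp\circ\cp=\cp$ together with the Gradient Principle at the foot point, whereas the paper expands via an arbitrary extension $u_E$ and then inserts the projector using $D\cp^T = D\cp^T\, P\circ\cp$ (the transpose of the retraction identity~\eqref{eqn:tang}); both are one-line manipulations leading to the same formula, after which the reduction to Lemma~\ref{lem:div} is identical.
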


\begin{proof}
	As $A$ maps only tangent vectors to tangent vectors the field $v := A \circ \cp \nabla[u \circ \cp]$ will be tangential to $S$
	only if $\nabla[u \circ \cp]$ is tangential to $S$. Since $\cp$ is a retraction its Jacobian satisfies (compare to \eqref{eqn:PDr})
	\begin{equation}\label{eqn:tang}
		D\cp = P \circ \cp D\cp \;.
	\end{equation}
	Now, we expand $\nabla[u \circ \cp]$ by referring to an arbitrary extension $u_E$ of $u$ as of Definition \ref{def:ext}:
	\[
		 \nabla [u \circ \cp]  =  D \cp^T \nabla u_E \circ \cp =  D \cp^T P \circ \cp \nabla u_E \circ \cp = D \cp^T \nabla_S u \circ \cp\;.
	\]
	Since $\xi := \nabla_S u \circ \cp(x)$ belongs to $\Tang_{\cp(x)} S$, $\nabla[u \circ \cp](x)$ also belongs to $\Tang_{\cp(x)} S$ by our requirement on $D \cp(x)^T$.
	Now, we are sure that $v := A \circ \cp \nabla[u \circ \cp]$ is tangential to $S$ for all $x$, and so the same arguments
	used in the proof of Theorem~\ref{theo:tensor} apply. \hfill
\end{proof}

\begin{prop}
	If $\cp$ is a $C^2$-smooth closest point function with a symmetric
	Jacobian $D \cp = D \cp^T$ then $\cp$ satisfies the requirements of Theorem~\ref{theo:tensor2}.
	The Euclidean closest point function $\ecp$ is one such closest point function with
	symmetric Jacobian
	\[
		D \ecp(x) =
		\begin{cases}
			I - \nabla d(x) \cdot \nabla d(x)^T - d(x) D^2 d(x) & x \in B(S) \setminus S, \\
			P(x) & x \in S.
		\end{cases}
	\]
\end{prop}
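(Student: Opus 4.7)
The proposition has two independent assertions, and I would prove them in this order.

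First assertion: symmetry of $D\cp$ implies the tangency hypothesis of Theorem~\ref{theo:tensor2}. The key observation is already contained in equation~\eqref{eqn:PDr}, applied to any $C^1$-retraction: $P\circ \cp(x)\cdot D\cp(x)=D\cp(x)$, which says that every column of $D\cp(x)$ lies in $\Tang_{\cp(x)}S$, i.e., the range of the linear map $D\cp(x)$ is contained in the tangent space at $\cp(x)$. If additionally $D\cp(x)^T=D\cp(x)$, then $D\cp(x)^T$ has the same range and therefore maps \emph{every} $\xi\in\R^n$ (in particular every $\xi\in\Tang_{\cp(x)}S$) into $\Tang_{\cp(x)}S$. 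This is strictly stronger than what Theorem~\ref{theo:tensor2} demands, so the hypothesis is satisfied.

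Second assertion: $\ecp$ has a symmetric Jacobian of the stated form. The plan is to exploit the identity already noted in the remark following Theorem~\ref{theo:ECPisCP}, namely $x-\ecp(x)=\nabla\!\left(\tfrac{d(x)^2}{2}\right)$, which holds on the whole band $B(S)$. Rewriting this as $\ecp(x)=x-\nabla\eta(x)$ with $\eta:=d^2/2$, and differentiating, gives $D\ecp(x)=I-D^2\eta(x)$. The right-hand side is manifestly symmetric because $D^2\eta$ is a Hessian. It remains to verify the two branches of the piecewise formula: on $B(S)\setminus S$, where $d$ is $C^2$ (by Section~\ref{subsubsect:RemEcp}, assuming the surface is sufficiently smooth), the product rule gives $D^2\eta=\nabla d\,\nabla d^T+d\,D^2 d$, producing exactly the claimed expression; on $S$ itself, Theorem~\ref{theo:ECPisCP} already gives $D\ecp(y)=P(y)$, which is the orthogonal projector and hence symmetric. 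The $C^2$-regularity needed to apply Theorem~\ref{theo:tensor2} is also guaranteed by the discussion in Section~\ref{subsubsect:RemEcp} provided $S$ is $C^3$.

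I do not expect any real obstacle here. The only subtle point worth being careful about is that in general codimension $d$ itself is \emph{not} differentiable on $S$, so one cannot naively plug $x\in S$ into the formula $I-\nabla d\,\nabla d^T-d\,D^2 d$; this is precisely why the proposition states the Jacobian in piecewise form, and why the argument has to split into the off-surface calculation (using $d^2/2$) and the on-surface identification (using Theorem~\ref{theo:ECPisCP}). A brief sentence noting that both branches agree in the limit as $x\to S$, via the already-established continuity of $D\ecp$, rounds off the proof.
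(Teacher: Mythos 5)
Your proof is correct and matches the paper's argument in its essentials: the first assertion is obtained exactly as in the paper, by combining $P\circ\cp\,D\cp=D\cp$ (the retraction identity) with $D\cp=D\cp^T$ to conclude that $D\cp^T$ has range in the tangent space. For the second assertion the paper simply refers back to the proof of Theorem~\ref{theo:ECPisCP} and the subsequent remark; your derivation via $\ecp(x)=x-\nabla\eta(x)$ with $\eta=d^2/2$, so that $D\ecp=I-D^2\eta$ is a difference of symmetric matrices, is precisely the content of that remark made explicit and arguably a cleaner way to see the symmetry than reading it off the piecewise formula.
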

\begin{proof}
	Equation \eqref{eqn:tang} combined with symmetry of the Jacobian yields
	\[
		P \circ \cp D\cp = D\cp = D \cp^T
	\]
	which shows that $D \cp^T$ in this case maps all vectors $\xi$ to tangent vectors.
	For the derivation of $D\ecp$ see the proof of Theorem~\ref{theo:ECPisCP}.
	For further smoothness properties of $\ecp$ see Section~\ref{subsubsect:RemEcp}.
	\hfill
\end{proof}

\subsubsection{More General Diffusion Coefficients}
In both Theorems~\ref{theo:tensor} and~\ref{theo:tensor2} the crucial bit is that $A(y)$ for fixed $y$ maps certain vectors $\xi$ to the tangent space, i.e., $A(y) \xi \in \Tang_y S$.
The theorems are still true if we let $A$ also depend on the function $u$, for example this dependence could be of the form $A(y,u(y),\nabla_S u(y))$.

\subsection{The Laplace--Beltrami operator} As an example we discuss the surface Laplacian (the Laplace--Beltrami operator) in terms of the closest point calculus:
\begin{enumerate}[1.]
	\item We may write the Laplace--Beltrami operator as
			\[
				\laplace_S u = \diver_S ( \nabla_S u) \;.
			\]
			Given a closest point function that satisfies the requirement of Theorem \ref{theo:tensor2}, e.g., the Euclidean closest point function $\ecp$, we have
			\[
				\laplace [u \circ \cp](y) = \laplace_S u(y) \; , \quad y \in S \;.
			\]
	\item Alternatively, we may rewrite the Laplace--Beltrami operator by using a diffusion tensor as
			\[
				\laplace_S u = \diver_S ( P \nabla_S u) \;.
			\]
			The diffusion tensor here is the projector onto the tangent space. So we have replaced the identity matrix with the surface intrinsic identity matrix.
			Now Theorem~\ref{theo:tensor} allows us to compute the Laplace--Beltrami operator like
			\[
				\diver ( P \circ \cp \nabla[u \circ \cp] )(y) = \laplace_S u(y) \; , \quad y \in S \;,
			\]
			without any further requirements on $D \cp$.
	\item If $P$ is not known a priori and $\cp$ does not satisfy the requirement of Theorem \ref{theo:tensor2}, we can always work with re-extensions
			(directly applying the Gradient and Divergence Principles~\ref{cor:PrinGrad} and~\ref{cor:PrinDiv})
			\[
				\diver( \nabla [u \circ \cp] \circ \cp)(y) = \laplace_S u(y) \; , \quad y \in S.
			\]
			Note by expanding the expression $\nabla [u \circ \cp] \circ \cp$ as in \eqref{eqn:GradReext}
			\begin{align*}
				\nabla [u \circ \cp] \circ \cp &= P\circ \cp \nabla u_E \circ \cp,
			\end{align*}
			we can see that re-extension in fact means an implicit version of approach 2.
\end{enumerate}

\subsection{Proof of a Principle of Ruuth \& Merriman}
For the original Closest Point Method (with $\ecp$) Ruuth \& Merriman in \cite{sjr:SimpleEmbed} also reasoned that a re-extension is not necessary
for some diffusion operators
based on the fact that the Euclidean closest point extension satisfies the PDE
\[
	\left< \, \nabla d(x) , \nabla [u \circ \ecp] \, \right> = 0 \; , \quad x \in B(S) \setminus S
\]
and the special principle:
\begin{quote}
  ``Let $v$ be any vector field on $\R^n$ that is tangent at $S$ and
  also tangent at all surfaces displaced by a fixed distance from $S$
  (i.e., all surfaces defined as level sets of the distance function
  $d$ to $S$).  Then at points $y$ on the surface $\diver_S v(y) =
  \diver v(y)$.''
\end{quote}
We give a proof of this below as a consequence of Lemma~\ref{lem:div}
but with the additional assumption of $C^2$-regularity of the vector field $v$ if $\codim S \geq 2$.

Ruuth \& Merriman \cite{sjr:SimpleEmbed} also use this principle to establish a divergence principle.
This requires that the surface vector field $w$ be tangent to the surface and it also requires the use of the Euclidean closest point function to extend the surface vector field to the embedding space as $v = w \circ \ecp$.
In contrast, the more general Divergence Principle~\ref{cor:PrinDiv} works for any (possibly non-tangential) surface vector field with extensions using any closest point function.

As said, their principle can also be used with the Euclidean closest point
function to allow fewer extensions in certain surface diffusion operators.
Theorem~\ref{theo:tensor2} generalizes this, allowing for a larger class of
closest point functions.

\begin{proof}
	Given the parametrization $\gamma$ of a generic surface patch, the map
	\[
		\Phi(\theta_1,\theta_2) = \gamma(\theta_1) + N \circ \gamma(\theta_1) \cdot \theta_2
	\]
	parametrizes $\ecp^{-1}(\gamma(\theta_1))$ for fixed $\theta_1$ and gives us the required coordinate system.
	For points $x$ off the surface the normal $\nabla d$ to the level-sets of the distance map is given by
	\[
		\nabla d(x) = \frac{x-\ecp(x)}{|x-\ecp(x)|} \qquad \Rightarrow \qquad \nabla d \circ \Phi(\theta_1,\theta_2) = \frac{N \circ \gamma(\theta_1) \cdot \theta_2}{|N \circ \gamma(\theta_1) \cdot \theta_2|}.
	\]
	Then the image of the following projection matrix
	\[
		Q(x) = I-\nabla d(x) \nabla d(x)^T \qquad \Rightarrow \qquad \bar{Q}(\theta_1,\theta_2) = Q \circ \Phi(\theta_1,\theta_2) = I - \frac{N \theta_2 \theta_2^T N^T}{|N \cdot \theta_2|^2}
	\]
	is tangent to level lines of the Euclidean distance map $d$. Since $v$ is itself tangent to the latter level lines, we have
	$Q \cdot v = v$ or in $\theta$-variables $\bar{Q} \bar{v} = \bar{v}$ with
	\[
		\bar{v}(\theta_1,\theta_2) = v \circ \Phi(\theta_1,\theta_2) = D_{\theta_1} \gamma(\theta_1) \cdot \tilde{v}(\theta_1,\theta_2) + N \circ \gamma(\theta_1) \cdot \eta(\theta_1,\theta_2)
	\]
	as in Lemma \ref{lem:div}. Since $\nabla d$ is also normal to the surface $S$, the product $\bar{Q} \bar{v}$ is
	\[
		\bar{Q} \bar{v} = D_{\theta_1} \gamma \cdot \tilde{v} + \bar{Q} N \eta \;.
	\]
	If the codimension is one, the matrix $N$ is an $n \times 1$ matrix and $\nabla d = \pm N$, hence, the summand $\bar{Q} N \eta$
   cancels out, and this product reduces to $\bar{Q} \bar{v} =  D_{\theta_1} \gamma \cdot \tilde{v}$.
	Consequently, in order for $\bar{Q} \bar{v} = \bar{v}$ to hold, $\eta \equiv 0$ must vanish, and Lemma \ref{lem:div} yields the result.
	If the codimension is higher, we rewrite the product as
	\[
		\bar{Q} \bar{v} = D_{\theta_1} \gamma \cdot \tilde{v} + N \cdot \left( I - \frac{\theta_2 \theta_2^T N^TN }{|N \cdot \theta_2|^2} \right) \eta \;.
	\]
	The coefficient must satisfy $\eta(\theta_1,0)=0$ so that $v(x)$ is tangent to the surface if $x \in S$.
	In order to have additionally $\bar{Q} \bar{v} = \bar{v}$, $\eta$ must also satisfy
	\[
		\theta_2^T A \eta = 0 \quad \text{where} \quad A = A(\theta_1) := (N\circ \gamma(\theta_1))^T \cdot N\circ \gamma(\theta_1) \;.
	\]
	We differentiate the new condition $\theta_2^T A \eta = 0$ with respect to $\theta_2$ and obtain
	\[
		\eta^T A  + \theta_2^T A D_{\theta_2} \eta = 0 \qquad \Rightarrow \qquad  \eta = - A^{-1} D_{\theta_2} \eta^T A \theta_2
	\]
	Since in higher codimension $v$ is assumed to be $C^2$, we can differentiate again which yields
	\[
		D_{\theta_2} \eta = - A^{-1} D_{\theta_2} \eta^T A  - A^{-1} D^2_{\theta_2} \eta^T A \theta_2 \;.
	\]
	We apply the trace-operator and get
	\[
		\trace D_{\theta_2} \eta = - \trace D_{\theta_2} \eta^T - \trace \left( A^{-1} D^2_{\theta_2} \eta^T A \theta_2 \right) \;.
	\]
	Finally, since the second derivative $D^2_{\theta_2} \eta$ is bounded on a small compact neighborhood of $(\theta_1,0)$ we have
	\[
		\trace D_{\theta_2} \eta = \Order(|\theta_2|) \;, \quad \theta_2 \to 0 \;.
	\]
	So the second requirement on $\eta$ is satisfied and Lemma \ref{lem:div} yields the result. \hfill
\end{proof}

\section{Construction of Closest Point Functions from Level-Set Descriptions}\label{sect:Const}
Beginning with the case of $\codim S =1$, we present a general
construction of closest point functions in the special case when the
surface is given by a level set (or as an intersection of several level sets.)

\subsection{Codimension One}\label{subset:codim1}
Let $\varphi: B(S) \to S$ denote a $C^2$-smooth scalar level-set function.
The surface $S$ is the zero-level of $\varphi$,
which we assume is a proper implicit description of $S$, i.e.,
\begin{equation}\label{eqn:LSetProper}
	\nabla \varphi(x) \neq 0 \quad \text{for all} \quad x \in B(S) \;.
\end{equation}
Thus the normals to level-sets are given by $N: B(S) \to \R^n$, $N = \nabla \varphi/|\nabla \varphi|$
and the normal field of $S$ is $N|_{S} : S \to \R^n$.
Again $B(S) \subset \R^n$ denotes a band around $S$ and condition \eqref{eqn:LSetProper} will determine a reasonable band $B(S)$
when starting out with a $\varphi$ defined on all of $\R^n$.

Now closely following \cite{TAM:Diss}, we construct a retraction by solving the following initial value problem (IVP):
\begin{equation}\label{eqn:bmoc}
	\xi' = - \nabla \varphi \circ \xi \;, \qquad \xi(0,x) = x \;.
\end{equation}
We denote the corresponding family of trajectories by $\xi(\tau, x)$, where $\tau$ is the ODE-time, while the parameter $x$ refers to the initial point.
If we start at a point $x$ with $\varphi(x) > 0$ and solve forward in ODE-time,
$\xi$ is a steepest descent trajectory, while if we start at a point $x$ with $\varphi(x) < 0$
and solve backward in ODE-time, we will obtain a steepest ascent trajectory heading for the surface.
The unique intersection of the trajectory $\xi(\dotarg,x)$ with the surface $S$ defines a retraction that maps the initial point $x$
to some point on $S$.  The next step is to find the point of intersection which we achieve by a suitable transformation of ODE \eqref{eqn:bmoc}.
We consider the descent case ($\varphi(x) > 0$), and relate the ODE-time $\tau$ and the level label $\lambda$ by
\begin{equation}\label{eqn:between}
	\varphi( \xi(\tau(\lambda),x) ) = \varphi(x)-\lambda \;.
\end{equation}
The implicit function $\tau:[0,\varphi(x)] \to \R_+$, $\lambda \to \tau(\lambda)$ takes the value $\tau(0)=0$
and has the derivative $\tau' = 1/|\nabla \varphi|^2 \circ \xi$.
The transformation that we want is $\eta(\lambda,x) := \xi(\tau(\lambda),x)$
because \eqref{eqn:between} can be rewritten as $\varphi( \eta(\lambda,x) ) = \varphi(x)-\lambda$, and
thus evaluating $\eta$ at $\lambda = \varphi(x)$ returns the corresponding point on the surface $S$:
\[
	\varphi( \eta(\varphi(x),x) ) = 0 \quad \Leftrightarrow \quad \eta(\varphi(x),x) \in S.
\]
We obtain $\eta$ as the solution of the new initial value problem (see also \cite{TAM:Diss})
\begin{equation}\label{eqn:bmoc2}
	\eta'(\lambda,x) = -\frac{\nabla \varphi}{|\nabla \varphi|^2} \circ \eta(\lambda,x) \;, \qquad \eta(0,x) = x \;.
\end{equation}
So far we have considered the descent case but for ascent we end up with the same IVP.
Finally, we obtain the desired closest point function $\cp$ by
\[
	\cp: B(S) \to S \;, \qquad \cp(x) := \eta(\varphi(x),x) \;.
\]
By construction, this function is a $C^1$-retraction (because the right-hand side of ODE \eqref{eqn:bmoc2} is $C^1$-smooth
the differentiability of $\eta(\tau,x)$ with respect to $x$ follows from ODE-theory, see e.g., \cite[Chapter 4.6]{KK:Ana2}).
And that it is in fact a closest point function is a direct consequence of Theorem~\ref{theo:CPbyGeo}:
because of our construction the pre-image $\cp^{-1}(y)$ is exactly a trajectory of the ODE \eqref{eqn:bmoc2}
and this trajectory intersects $S$ orthogonally.

\subsubsection{Remarks}\label{subsubsect:Remarks}
\begin{enumerate}[$\bullet$]
   \item If one is only interested in the construction of a retraction map it suffices
			to replace $\nabla \varphi$ in \eqref{eqn:bmoc2} with a transversal field $c$, i.e.,
			$|c| = 1$ and $\left< c, N \right> \geq \beta > 0$. After a similar transformation
			the retraction is given by $r(x) = \eta(\varphi(x),x)$.
			 This approach to retractions is actually the method of backward characteristics (see e.g., \cite{TAM:Diss}):
			the solution of the PDE
			\begin{equation}\label{eqn:extensionPDE}
				\left< c(x), \nabla v \right> = 0 \;, \quad v|_S = u \;,
			\end{equation}
			is $v = u \circ r$. So, in a neighborhood of $S$, $v$ defines an extension of the surface function $u$
			and is as smooth as $u$ (see the method of characteristics in e.g., \cite{LCE:PDE}).
		   The idea of extending surface functions by solving \eqref{eqn:extensionPDE} with $c=\nabla \varphi$
			has been used in other earlier works, see e.g., \cite{BCO:VPaPDEoIS}.
	\item Note that the same ODE as in \eqref{eqn:bmoc2} is used in \cite{MWH:DiffTopo} for the construction of diffeomorphisms.
	\item If $f: \R \to \R$ is a smooth function with $f' \neq 0$ then the closest point function obtained from using $\hat{\varphi} := f \circ \varphi$
			in IVP \eqref{eqn:bmoc2} is the same as that obtained from the original $\varphi$ because the corresponding ODEs parametrize the same curve through the initial point $x$.
	\item The Euclidean closest point function is a special case our construction:
			let $\varphi = \sd$ be the signed Euclidean distance function.
			In this case, the IVP for $\eta$ reduces to
			$\eta'(\lambda,x) = - \nabla \sd(x)$, $\eta(0,x) = x$,
			with a right-hand side independent of $\lambda$.
			This has the solution $\eta(\lambda,x) = x - \lambda \nabla \sd(x)$
			and the corresponding closest point function is the Euclidean one
			\[
				\eta(\sd(x),x) = x - \sd(x) \nabla \sd(x) = \ecp(x)\;.
			\]
\end{enumerate}

\subsection{Higher Codimension}
In the case of higher codimension, $\codim S = m \geq 2$, we assume that $S$ is the proper intersection of $m$ surfaces of codimension one.
Given this, we construct the closest point function $\cp = \cp_m \circ \cdots \circ \cp_2 \circ \cp_1$ as the composition of $m$ closest point functions, where $\cp_1$ retracts
onto $S_1$, $\cp_2$ retracts $S_1$-intrinsically onto $S_1 \cap S_2$, $\cp_3$ retracts $S_1 \cap S_2$-intrinsically onto $S_1 \cap S_2 \cap S_3$, and so on.
We demonstrate only the situation where $\codim S = 2$, since this is essentially the inductive step for the general case.

Let again $B(S) \subset \R^n$ be a band around $S$ and let $\varphi_j: B(S) \to S$, $j \in \{1,2\}$, be  $C^2$-smooth level-set functions.
Each level-set function $\varphi_j$ shall yield a proper implicit description of a codimension-one surface $S_j$ as its zero-level, thus we require:
\[
	 \nabla \varphi_j(x) \neq 0 \;, \quad \text{for all} \quad x \in B(S) \;.
\]
The two surfaces $S_1$ and $S_2$ are orientable with normal vector fields given by $N_j: S_j \to \R^n$, $N_j = \nabla \varphi_j/|\nabla \varphi_j|$.
We assume their intersection $S = S_1 \cap S_2$ to be proper, that is, the normal vectors have to be linearly independent on $S$.
In fact we assume the linear independence to hold on all of $B(S)$ (possibly we have to narrow $B(S)$).
This implies that any two level-sets $S_1^{\mu_1} := \{ x : \varphi_1(x) = \mu_1\}$, $S_2^{\mu_2} := \{ x : \varphi_2(x) = \mu_2\}$
intersect properly (as long as the intersection is non-empty).

Now, we set up a closest point function $\cp = \cp_2 \circ \cp_1$ in two steps.
The first step is the same as in the previous section: $\cp_1 : B(S) \to B(S) \cap S_1$ maps onto the subset
$B(S) \cap S_1$ of the first surface $S_1$ by $\cp_1 = \eta_1(\varphi_1(x),x)$ where $\eta_1$ solves \eqref{eqn:bmoc2} with
$\varphi_1$ in place of $\varphi$.

The second step is more interesting: we construct an $S_1$-intrinsic retraction $\cp_2 : B(S) \cap S_1 \to S$.
The idea is essentially the same as that of the previous set-up, but now it is $S_1$-intrinsic:
we consider the IVP
\begin{equation}\label{eqn:bmoc2_inS1}
	\eta_2'(\lambda,x) = -\frac{\nabla_{S_1} \varphi_2}{|\nabla_{S_1} \varphi_2|^2} \circ \eta_2(\lambda,x) \;, \qquad \eta_2(0,x) = x \;,
\end{equation}
by transforming analogously to the codimension-one case (c.f., \eqref{eqn:bmoc2}).
Note that $\nabla_{S_1} \varphi_2$ is given by $\nabla_{S_1} \varphi_2 = P_1 \nabla \varphi_2$ with $P_1 = I -N_1 N_1^T$
and that the assumption of a proper intersection guarantees that $\nabla_{S_1} \varphi_2$ does not vanish since $N_1$ and $\nabla \varphi_2$ are linearly independent.
Because we start in $S_1$ and $\nabla_{S_1} \varphi_2$ is tangential to $S_1$, the curve $\eta_2(\dotarg,x)$ is contained in $S_1$.
Hence, we obtain an $S_1$-intrinsic closest point function $\cp_2$ by
\[
	\cp_2: B(S) \cap S_1 \to S \;, \qquad \cp_2(x) := \eta_2(\varphi_2(x),x) \;.
\]
The composition $\cp= \cp_2 \circ \cp_1$ defines certainly a $C^1$-retraction $\cp: B(S) \to S$,
and, as all the intersections are orthogonal, Theorem~\ref{theo:CPbyGeo} guarantees $\cp$ to be a closest point function.

\begin{figure}
	\begin{center}
		\includegraphics[width=.43\textwidth]{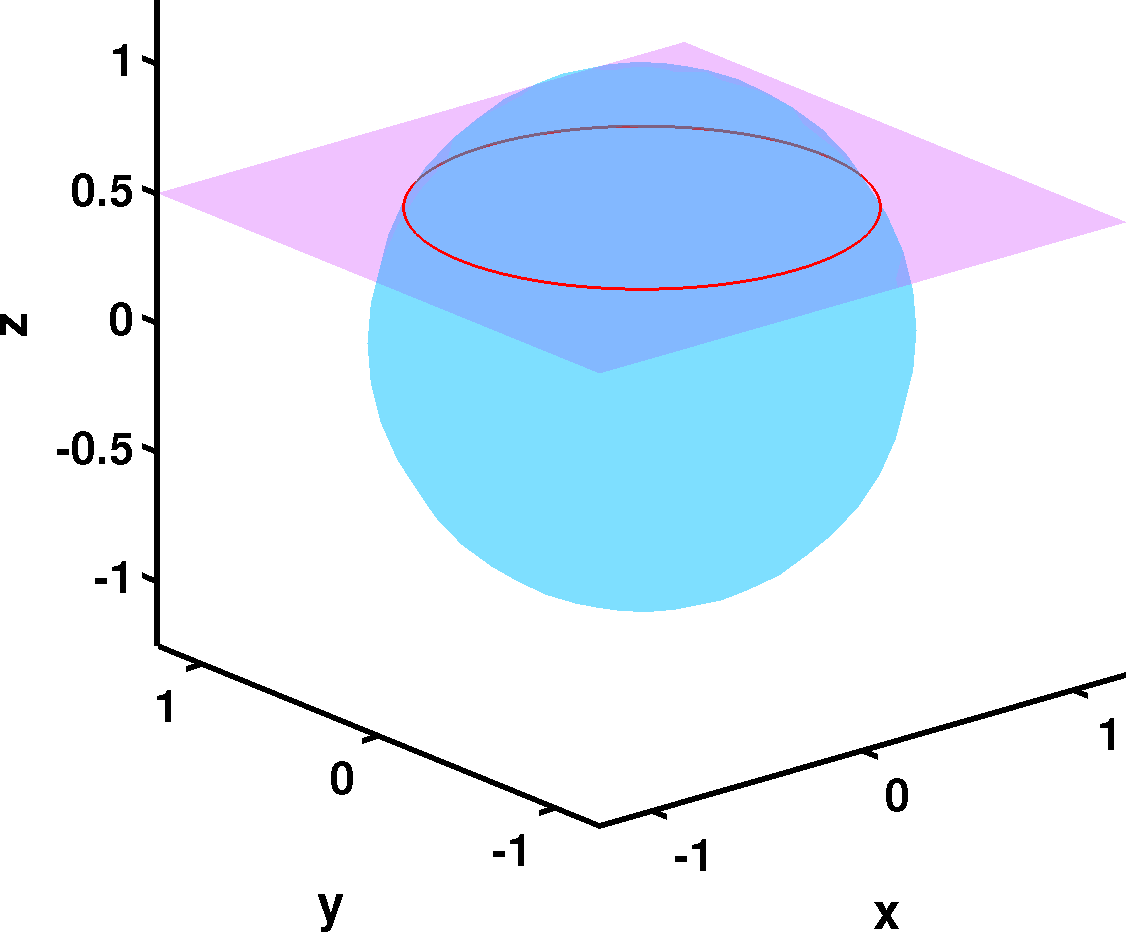}%
                \hspace*{.06\textwidth}%
		\includegraphics[width=.43\textwidth]{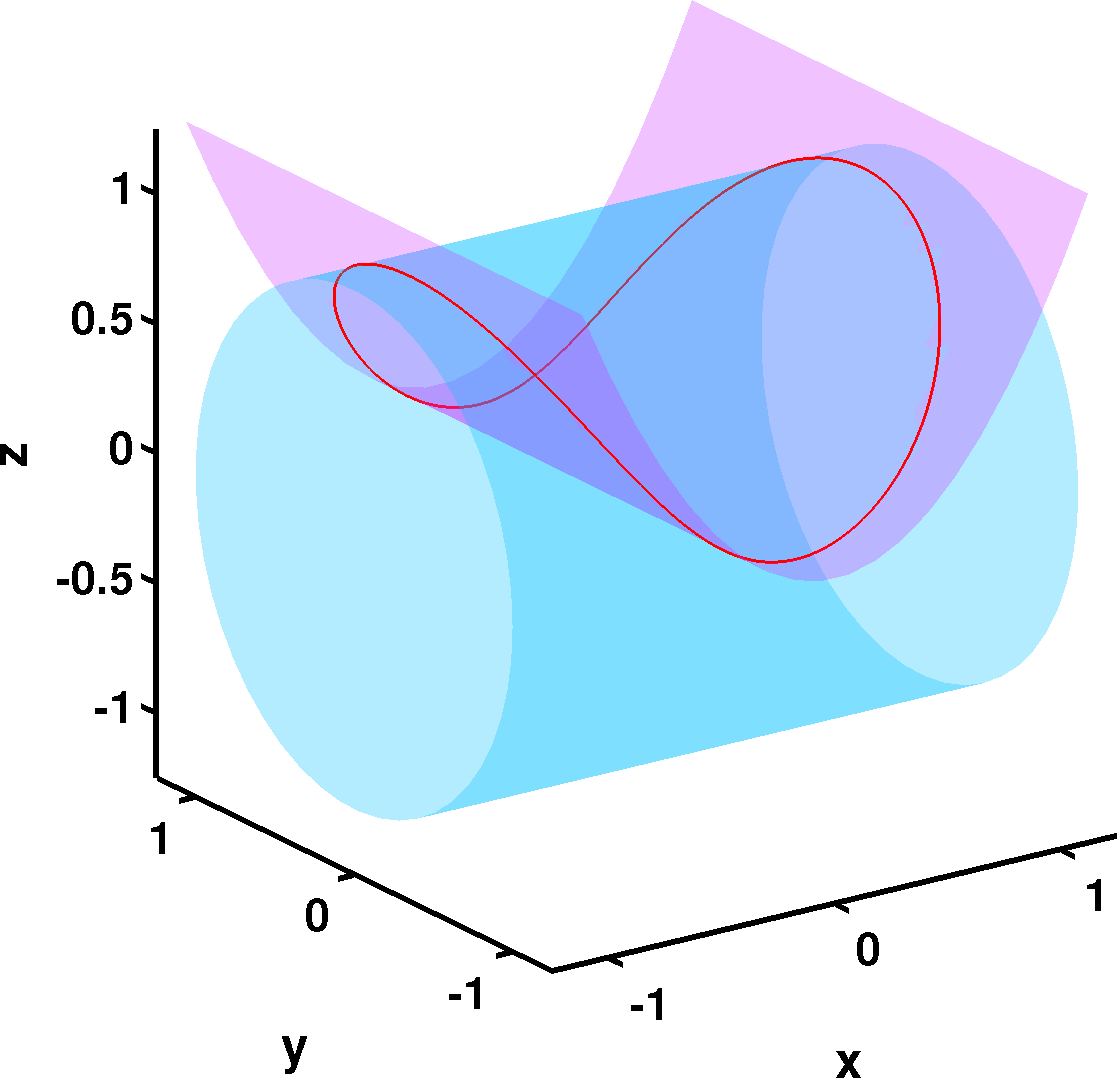}
	\end{center}
	\caption{Example 1 (left): a circle (red) embedded in $\R^3$, given as the intersection of a sphere and a plane.
				Example 2 (right): a curve (red) embedded in $\R^3$, given as the intersection of a cylinder and a parabola.
			}\label{fig:circle_pringle}
\end{figure}

\subsection{Example 1}\label{sect:example1}
We construct a closest point function for a circle $S$ of radius $\sqrt{3}/2$ embedded in $\R^3$ as the intersection of a sphere and a plane (Figure~\ref{fig:circle_pringle} left).
The two level set functions are
\begin{align*}
	\varphi_1 (x_1,x_2,x_3) &= x_1^2 + x_2^2 + x_3^2 -1 \;, &
	\varphi_2 (x_1,x_2,x_3) &= x_3- \tfrac{1}{2} \;.
\end{align*}
The equation $\varphi_1 = 0$ yields the unit sphere as the codimension-one surface $S_1$,
equation $\varphi_2 = 0$ specifies the second codimension-one surface $S_2$ which is a plane parallel to the $x_1 x_2$-plane at $x_3=1/2$,
and the circle $S=S_1 \cap S_2$ is the intersection.

First step: we set up a closest point map $\cp_1$ onto the sphere $S_1$.
Let $c_1 = \nabla \varphi_1$, the IVP for $\eta_1$ is
\begin{equation}\label{eqn:IVP_sphere}
	\eta_1' = -\frac{\nabla \varphi_1}{\left| \nabla \varphi_1 \right|^2} \circ \eta_1 = -\frac{\eta_1}{2 \cdot |\eta_1|^2} \;, \quad \eta_1(0,x) = x \in \R^3 \wo \{0\} \;.
\end{equation}
The maximal band around the sphere where $\nabla \varphi_1 \neq 0$ does not vanish is $\R^3 \wo \{0\}$ .
The solution $\eta_1$ of IVP \eqref{eqn:IVP_sphere} and the corresponding closest point function $\cp_1: \R^3 \wo \{0\} \to S_1$ are
\[
	\eta_1(\lambda,x) = \sqrt{|x|^2 - \lambda} \cdot \frac{x}{|x|} \quad \Rightarrow \quad \cp_1(x) = \eta_1(\varphi_1(x),x) = \frac{x}{|x|} \;.
\]
(Note that $\varphi_1$ is of the form $\varphi_1 = f \circ \sd$ with $f$ strictly monotone. Hence $\cp_1$ is the Euclidean closest point function,
see the remarks \ref{subsubsect:Remarks}.)

Second step: we set up an $S_1$-intrinsic closest point map $\cp_2$ onto the circle $S$.
Let $c_2 = \nabla_{S_1} \varphi_2 = P_1 \cdot \nabla \varphi_2$.
The projector $P_1$ is given by
\[
	P_1(x) = I - \frac{\nabla \varphi_1(x) \cdot \nabla \varphi_1(x)^T}{\left| \nabla \varphi_1(x) \right|^2}=
	\frac{1}{x_1^2 + x_2^2 + x_3^2} \cdot \left(
	\begin{matrix}
		x_2^2 + x_3^2 & -x_1 x_2 & -x_1 x_3 \\
		-x_1 x_2 & x_1^2 + x_3^2 & -x_2 x_3 \\
		-x_1 x_3 & -x_2 x_3 & x_1^2 + x_2^2
	\end{matrix}
	\right) \;.
\]
For $x \in S_1$, i.e., $|x|=1$, we get then
\begin{align*}
	\nabla_{S_1} \varphi_2(x) = P_1(x) \cdot \nabla \varphi_2(x) &=
	\left(
		 -x_1 x_3,\;
		 -x_2 x_3,\;
		 x_1^2 + x_2^2
	\right)^T=
	\left(
		 -x_1 x_3,\;
		 -x_2 x_3,\;
		 1-x_3^2
	\right)^T, \\
	|\nabla_{S_1} \varphi_2(x)|^2 &= 1-x_3^2\;.
\end{align*}
The largest subset of $S_1$ where $\nabla_{S_1} \varphi_2$ does not vanish is the sphere without the poles $S_1 \wo \{\pm e_3\}$, and so the IVP for $\eta_2$ is
\begin{equation}\label{eqn:IVP_sphere_intrinsic}
	\eta_2' = -\frac{\nabla_{S_1} \varphi_2}{\left| \nabla_{S_1} \varphi_2 \right|^2} \circ \eta_2 =
	\left(
	\begin{matrix}
		 \frac{\eta_{2,3}}{1-\eta_{2,3}^2} \cdot \eta_{2,1}, \,
		 \frac{\eta_{2,3}}{1-\eta_{2,3}^2} \cdot \eta_{2,2}, \,
		 -1
	\end{matrix}
	\right)^T \;, \quad \eta_2(0,x) = x \in S_1 \wo \{\pm e_3\} \;,
\end{equation}
where $\eta_{2,j}$ is the $j$-th component of $\eta_2$.
The solution $\eta_2$ of IVP \eqref{eqn:IVP_sphere_intrinsic} and the corresponding closest point function $\cp_2: S_1 \wo \{\pm e_3\} \to S$ are
\begin{align*}
	\eta_2(\lambda,x) &=
	\left(
	\begin{matrix}
		 \sqrt{ \frac{1-(x_3-\lambda)^2}{1-x_3^2} } \cdot x_1, &
		 \sqrt{ \frac{1-(x_3-\lambda)^2}{1-x_3^2} } \cdot x_2, &
		 x_3-\lambda
	\end{matrix}
	\right)^T \\
	\Rightarrow \qquad \cp_2(x) &= \eta_2(\varphi_2(x),x) = \frac{1}{2} \cdot
	\left(
	\begin{matrix}
		 \frac{\sqrt{3}}{ \sqrt{1-x_3^2} } \cdot x_1, &
		 \frac{\sqrt{3}}{ \sqrt{1-x_3^2} } \cdot x_2, &
		 1
	\end{matrix}
	\right)^T\;.
\end{align*}
Finally, we compose the closest point function $\cp: B(S) = \R^3 \wo \{x : x_1 = x_2 = 0\} \to S$ by
\[
	\cp(x) = \cp_2 \circ \cp_1(x) = \frac{1}{2} \cdot
	\left(
	\begin{matrix}
		 \frac{\sqrt{3} x_1}{ \sqrt{x_1^2 + x_2^2} }, &
		 \frac{\sqrt{3} x_2}{ \sqrt{x_1^2 + x_2^2} }, &
		 1
	\end{matrix}
	\right)^T\;.
\]

\begin{figure}
	\begin{center}
		\includegraphics[width=.43\textwidth]{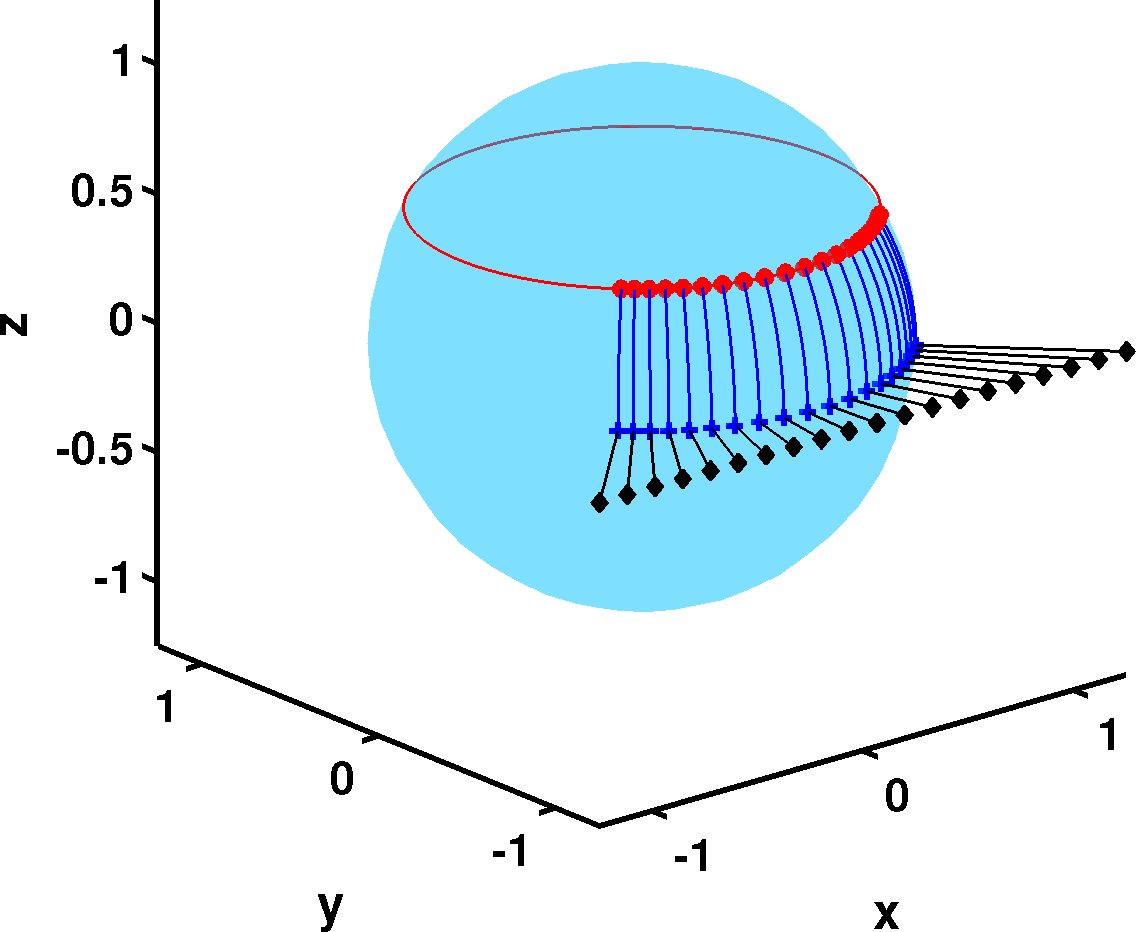}%
                \hspace*{.06\textwidth}%
		\includegraphics[width=.43\textwidth]{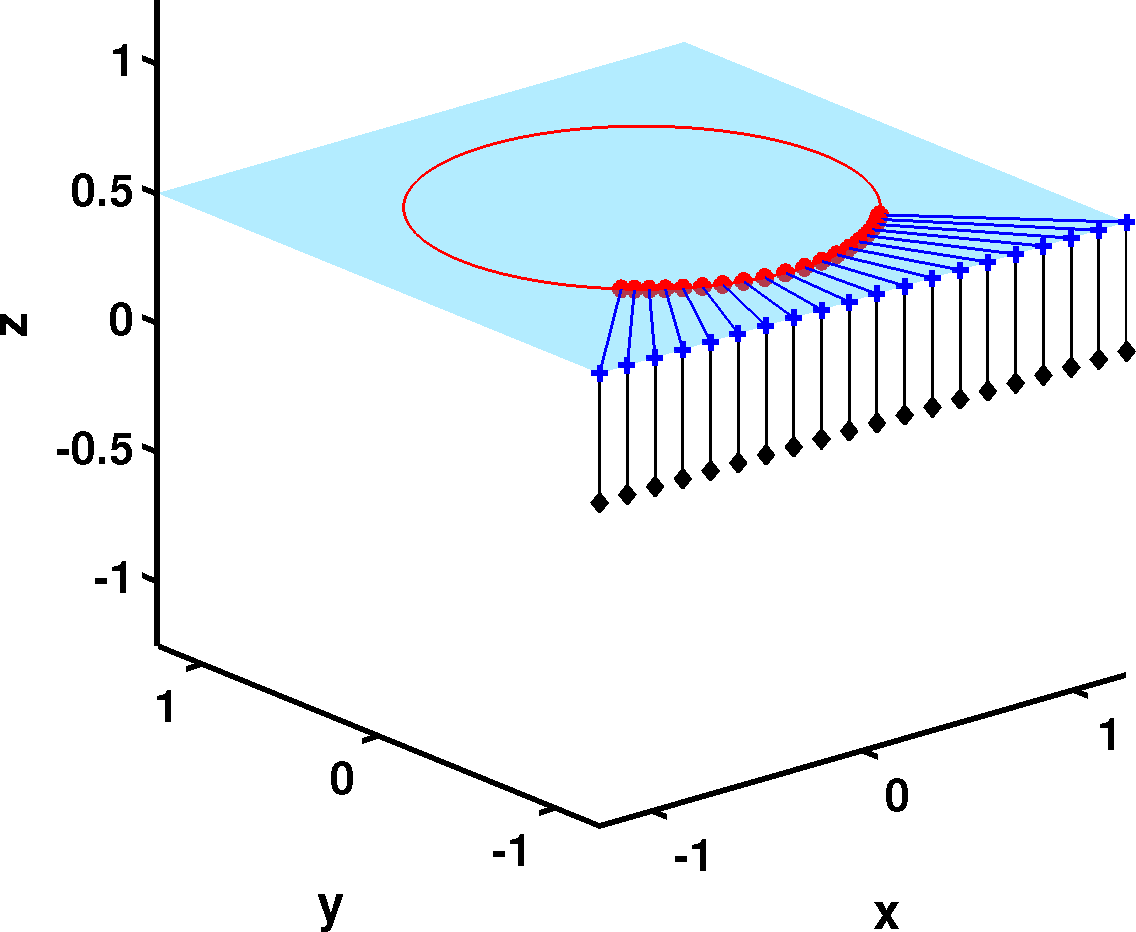}
	\end{center}
	\caption{A circle (red) embedded in $\R^3$, given as the intersection of a sphere and a plane.
				Left: the closest point function $\cp = \cp_2 \circ \cp_1$ maps the black diamonds to the red dots by a two-stage retraction:
				first $\cp_1$ maps the black diamonds onto the blue crosses on the light blue sphere,
				and second $\cp_2$ maps blue crosses onto the red dots on the red circle by following
				trajectories on the sphere.
				Right: the closest point function $\hat{\cp}(x) = \hat{\cp}_1 \circ \hat{\cp}_2(x)$ first
				maps the black diamonds onto the blue crosses in the light blue plane,
				then maps the blue crosses onto the red dots on the red circle
				by following trajectories contained in the plane.
			}\label{fig:circ1}
\end{figure}

Figure~\ref{fig:circ1} shows this construction of $\cp$ schematically.
The maximal band $B(S)$ around $S$---where $\cp$ is defined---is $\R^3$ without the $x_3$-axis, since the $x_3$-axis
gets retracted by $\cp_1$  to the north-/south-pole of the sphere $S_1$ where $\cp_2$ is not defined.

This first example is intended to highlight the concept of our approach.
In fact, in this particular case, it is much simpler to first project onto $S_2$ by
$\hat{\cp}_2(x) = (x_1,x_2,1/2)^T$, and then retract $S_2$-intrinsically (i.e., in the plane) onto $S$
by $\hat{\cp}_1: S_2 \wo \{0\} \to S$,
to obtain $\hat{\cp}(x) = \hat{\cp}_1 \circ \hat{\cp}_2(x)$
This approach is also illustrated in Figure~\ref{fig:circ1} and, in this particular case,
yields the same closest point function (and in fact they are both equal to $\ecp$).

\subsection{Example 2}\label{subsect:ex2}
We consider a curve embedded in $\R^3$ given as the intersection of a cylinder with a parabola (see Figure~\ref{fig:circle_pringle} right).
The two level set functions are
\begin{equation}\label{eqn:pringle}
	\begin{aligned}
		\varphi_1 (x_1,x_2,x_3) &= 1 - x_2^2 - x_3^2\;, &
		\varphi_2 (x_1,x_2,x_3) &= x_3- x_1^2 \;.
	\end{aligned}
\end{equation}

This time we find the closest point functions numerically using the ODE solver \texttt{ode45} in MATLAB.
The first closest point function $\cp$---which first maps onto the cylinder---is obtained by solving the two ODEs below
\begin{align*}
	\eta_1 &= -\frac{\nabla \varphi_1}{|\nabla \varphi_1|^2} \circ \eta_1 \;, \quad \eta_1(0,x) = x \;, &
	& \Rightarrow \quad z := \eta_1(\varphi_1(x),x) \;, \\
	\eta_2 &= -\frac{P_1 \nabla \varphi_2}{|P_1 \nabla \varphi_2|^2} \circ \eta_2 \;, \quad \eta_2(0,z) = z \;,  &
	& \Rightarrow \quad \cp(x) := \eta_2( \varphi_2(z), z) \;,
\end{align*}
in the given order.
In the same way we obtain the second function $\hat{\cp}$---which maps first onto the parabola---by interchanging the roles of $\varphi_1$ and $\varphi_2$
and numerically solving the ODEs
\begin{align*}
	\eta_2 &= -\frac{\nabla \varphi_2}{|\nabla \varphi_2|^2} \circ \eta_2 \;, \quad \eta_2(0,x) = x \;, &
	& \Rightarrow \quad z := \eta_2(\varphi_1(x),x) \;, \\
	\eta_1 &= -\frac{P_2 \nabla \varphi_1}{|P_2 \nabla \varphi_1|^2} \circ \eta_1 \;, \quad \eta_1(0,y) = z \;, &
	& \Rightarrow \quad  \hat{\cp}(x) := \eta_1( \varphi_1(z), z).
\end{align*}
The retraction stages of the resulting closest point functions $\cp$ and $\hat{\cp}$ are visualized in Figure~\ref{fig:pr1}.

In contrast with our first example, here the two closest point functions $\cp$ and $\hat{\cp}$ are different.
We define a $50 \times 50 \times 50$ Cartesian grid $G$ on a reference box $R =  [-1.25, 1.25] \times [-1.25, 1.25] \times  [-0.25, 1.25]$ containing $S$.
We use a subset of these points as a narrow band of grid points
surrounding $S$.\footnote{For this particular example we define a band by $B(S) = \{ (x_1,x_2,x_3) : \varphi \leq 0.125\}$ where we use
$\varphi = (\varphi_1^2 + \varphi_2^2)^{\frac{1}{2}}$
in lieu of Euclidean distance. Our banded grid is $G \cap B(S)$ and contains 1660 grid points.
It contains more points than are strictly necessary: see \cite[Appendix A]{cbm:icpm} for an approach to banded grids for the Closest Point Method.}
Comparing the values of the distance function $\varphi = (\varphi_1^2 + \varphi_2^2)^{\frac{1}{2}}$
at the closest points
\begin{align*}
	& \max\limits_{x \in  G \cap B(S) } \varphi( \, \cp(x) \,) =  4.4893 \cdot 10^{-15}, \\
	& \max\limits_{x \in  G \cap B(S) } \varphi( \, \hat{\cp}(x) \,) =  4.4758 \cdot 10^{-15}
\end{align*}
we can see that both numerical functions $\cp$ and $\hat{\cp}$ are very accurate.  We also note they are indeed different mappings because $\max_{x \in  G \cap B(S) } |\cp(x) - \hat{\cp}(x)| = 1.7095 \cdot 10^{-03}$
(see also Table~\ref{tab:ErrTabHeat1} where they are clearly distinct from $\ecp$).
\begin{figure}
	\begin{center}
		\includegraphics[width=.43\textwidth]{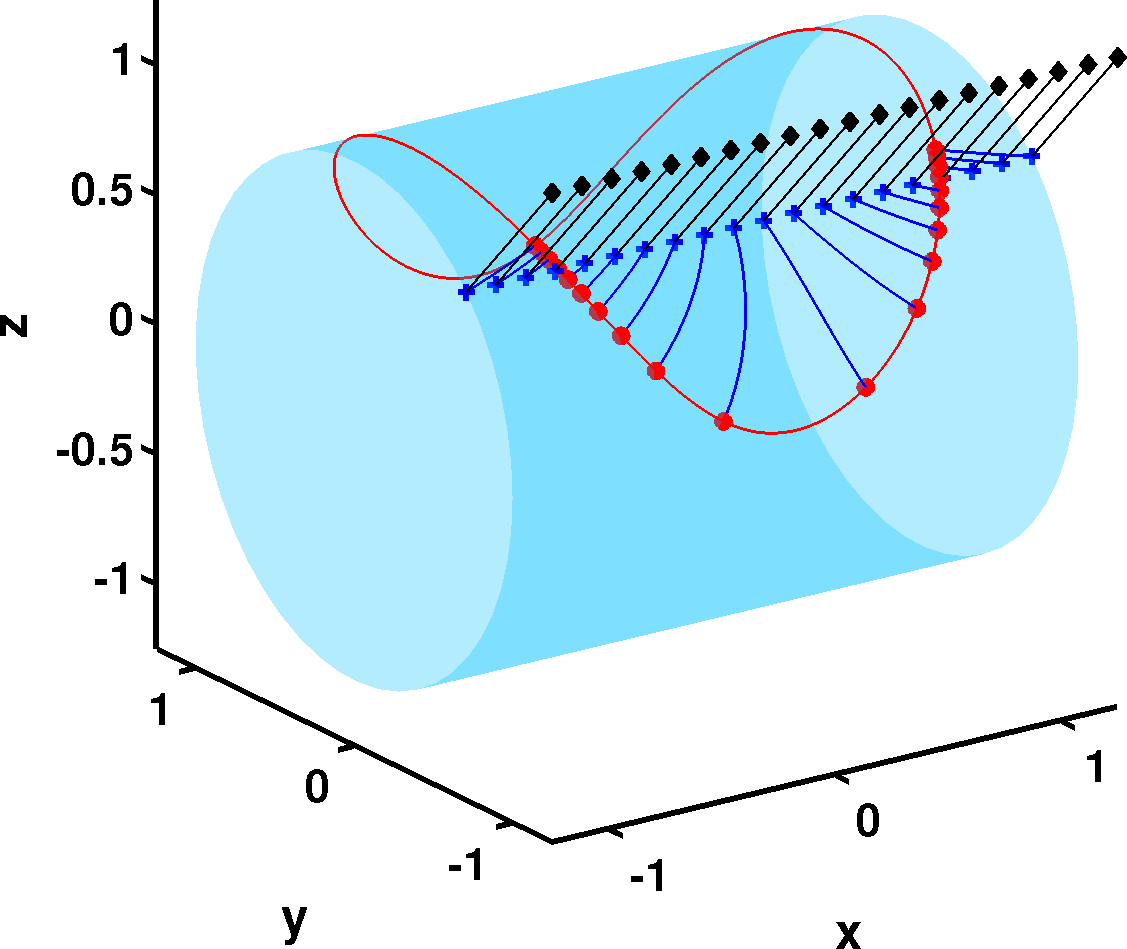}%
                \hspace*{.06\textwidth}%
		\includegraphics[width=.43\textwidth]{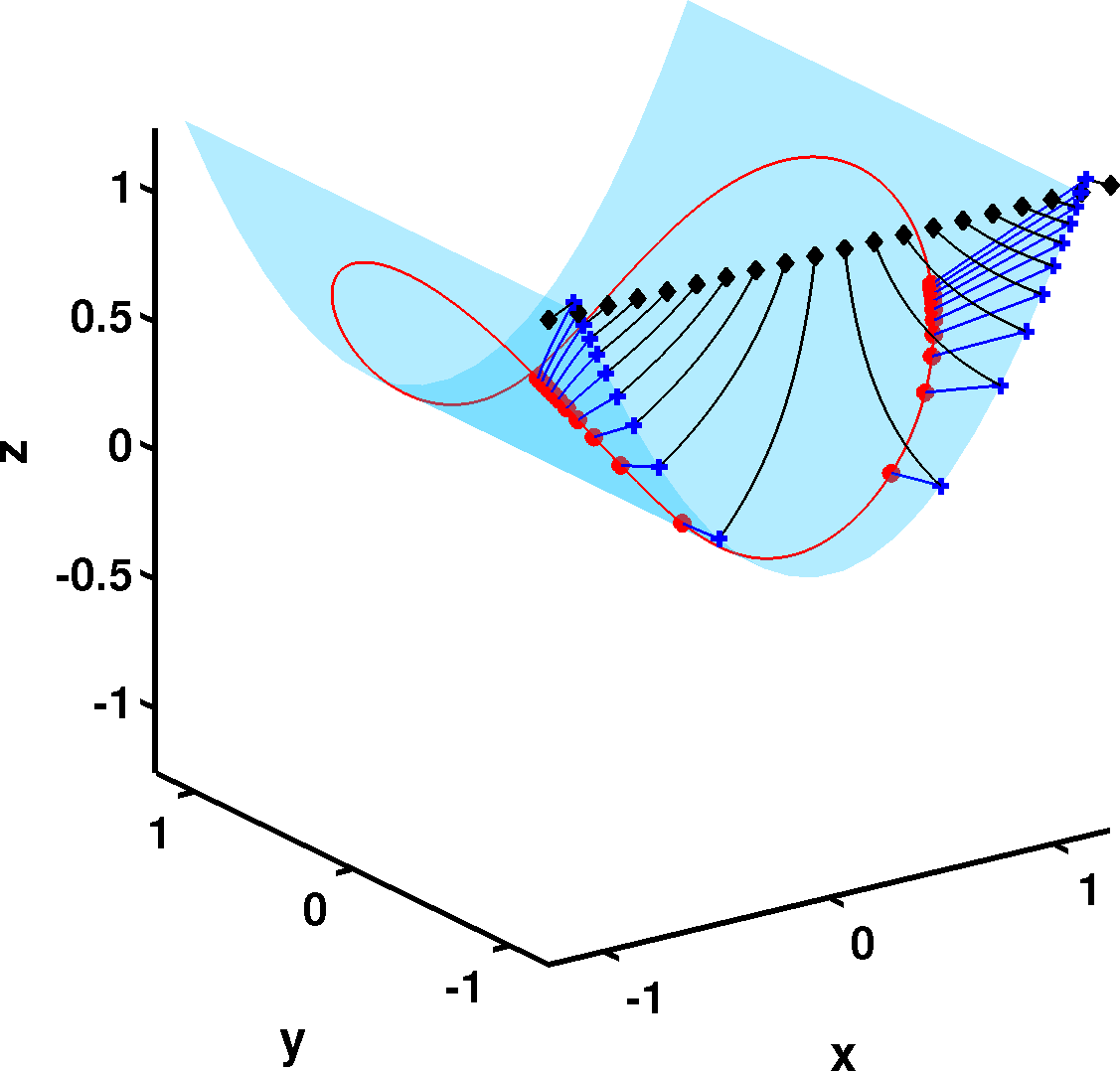}
	\end{center}
	\caption{A curve (red) embedded in $\R^3$, given as illustrated in Figure~\ref{fig:circle_pringle} (right). Left:
				the closest point function $\cp = \cp_2 \circ \cp_1$ maps the black diamonds to the red dots by a two-stage retraction:
				first $\cp_1$ maps the black diamonds onto the blue crosses on the light blue cylinder, and
				second $\cp_2$ maps the blue crosses onto the red dots on the red curve by following
				trajectories on the cylinder surface.
				Right: the closest point function $\hat{\cp} = \hat{\cp}_1 \circ \hat{\cp}_2$
				first maps the black diamonds onto the blue crosses on the light blue parabola by $\hat{\cp}_2$,
				then $\hat{\cp}_1$ maps the blue crosses onto the red dots on the red curve by following
				trajectories on the parabola.}\label{fig:pr1}
\end{figure}

\section{The Closest Point Method with Non-Euclidean Closest Point Functions}\label{sect:CPM}
In this section we demonstrate that the explicit Closest Point Method based on Euler time-stepping
still works when replacing the Euclidean closest point function with non-Euclidean ones.

Given an evolution equation on a smooth closed surface $S$ as
\begin{align*}
	\pd_t u - A_S(t,y,u) &= 0  \;, &  u(0,y) &= u_0(y) \;, \quad y \in S
\end{align*}
where $A_S$ is a linear or nonlinear surface-spatial differential operator on $S$, following \cite{sjr:SimpleEmbed},
the semi-discrete explicit Closest Point Method based on Euler time-stepping with time step $\tau$ is
\begin{align*}
	&\text{Initialization:} & v_0 & = u_0 \circ \cp \\
	&\text{Evolve step:} & w_{n+1} &= v_n + \tau A(t_n,x,v_n) \;, \quad x \in B(S) \\
	&\text{Extension step:} & v_{n+1} &= w_{n+1} \circ \cp
\end{align*}
where $A$ is a spatial operator on $B(S)$ which is defined from $A_S$ via the closest point calculus and hence
\[
	A(t,x,u \circ \cp)|_{x = y} = A_S(t,y,u) \;, \quad y \in S \;.
\]
For the fully discrete Closest Point Method which is also discrete with respect to the spatial variable $x \in B(S)$
we replace $A$ with a discretization of it, while the extension $w_{n+1} \circ \cp(x)$ is replaced with interpolation
of the discrete $w_{n+1}$ in a neighborhood of $\cp(x)$ because the point $\cp(x)$ is not a grid point in general \cite{sjr:SimpleEmbed}.

\subsection{Advection Equation}
We consider an advection problem with a constant unit speed on the curve $S$ shown in Figure~\ref{fig:circle_pringle} (right):
\begin{equation}\label{eqn:Advec}
	\begin{aligned}
		\pd_t u + \diver_S \left( u \cdot T(y) \right) &= 0 \;,  &
		u(0,y) &= y_3 \;, \quad y \in S \;, \\
		&& \text{with} \quad T(y) &= - \frac{\nabla \varphi_1(y) \times \nabla \varphi_2(y)}{|\nabla \varphi_1(y) \times \nabla \varphi_2(y)|} \;.
	\end{aligned}
\end{equation}
The surface-spatial operator is $A_S(y,u) = -\diver_S \left( u \cdot T(y) \right)$, while the operator used in the Closest Point Method is
\[
	A(x,v) = -\diver(v \cdot T \circ \cp(x)) \;, \quad v = u \circ \cp \;, \quad x \in B(S)
\]
according to the Divergence Principle~\ref{cor:PrinDiv}.

We solved this advection problem with three different closest point functions---namely $\cp$, $\hat{\cp}$ from Example~2, and
the Euclidean closest point function $\ecp$---and with three different mesh sizes $h$ on the reference box $R =  [-1.25, 1.25] \times [-1.25, 1.25] \times  [-0.25, 1.25]$.
The $\ecp$ was computed with a numerical optimization procedure using Newton's method.
The evolve step uses the first order accurate Lax--Friedrichs scheme (time step $\tau = 0.95 h$ in accordance with the CFL-condition) and the extension step is performed with
WENO interpolation \cite{cbm:lscpm} (based on tri-quadratic interpolation).
The solution is advected until time $t = 1$.

We compare to a highly accurate solution obtained from parametrizing the problem.
By using the parametrization $\gamma : [0, 2 \pi) \to S$, $\gamma(\theta) = (\cos(\theta), \sin(\theta) \sqrt{1+\cos(\theta)^2}, \cos(\theta)^2)^T$
the advection problem \eqref{eqn:Advec} is equivalent to
\begin{equation}\label{eqn:AdvecParam}
  |\gamma'(\theta)| \pd_t \bar{u} - \pd_\theta \bar{u} = 0 \;,
  \qquad\qquad
  \bar{u}(0,\theta) = \cos(\theta)^2 \;,
  \quad
  \bar{u}(t,2\pi) = \bar{u}(t,0) \;,
\end{equation}
where $\bar{u}(t,\theta) = u(t,\gamma(\theta))$. The formal solution of the latter is
\begin{equation}\label{eqn:AdvecSol}
	\bar{u}(t,\theta) = \cos \left( s^{-1}\left( t + s(\theta) \right) \right)^2 \;, \quad \text{where} \quad s(\theta) = \int\limits_0^\theta |\gamma'(\alpha)| \; d\alpha
\end{equation}
is the arc-length function. We used Chebfun \cite{chebfunv4.1} within MATLAB
to find a highly accurate approximation to $s(\theta)$ and thereafter a Newton iteration to approximate the value $s^{-1}\left( t + s(\theta) \right)$.

Table~\ref{tab:ErrTabAdvec} shows the error in the results measured in $l_{\infty}$-norm at $t=1$.
We see there is no significant difference regarding the choice of the closest point function
and that the error is of order $O(h)$ as expected from the Lax--Friedrichs scheme.

\begin{table}
  \caption{Errors measured in $l_{\infty}$-norm at stop time $t=1$ for Closest Point Method approximation of the
    advection problem \eqref{eqn:Advec} on the curve shown in Figure~\ref{fig:circle_pringle} (right)
	 using various closest point functions.
    There is no significant difference regarding the choice of the
    closest point function.}
  \label{tab:ErrTabAdvec}
	\begin{center}
		\begin{tabular}{|c|c|c|c|}
			\hline
			$h$ & $\cp$ & $\hat{\cp}$ & $\ecp$ \\
			\hline
			0.0125 &  9.1630e-03 &  9.1592e-03 &  9.0615e-03 \\
			0.00625 &  4.6182e-03 &  4.6172e-03 &  4.5544e-03 \\
			0.003125 &  2.3262e-03 &  2.3260e-03 &  2.2908e-03 \\
			\hline
		\end{tabular}
	\end{center}
\end{table}

\subsection{Diffusion Equation}
Here, we consider the diffusion equation on the curve $S$ shown in Figure~\ref{fig:circle_pringle} (right):
\begin{equation}\label{eqn:Heat}
	\begin{aligned}
		\pd_t u - \laplace_S u &= 0 \;, &&&
		u(0,y) &= \frac{\exp(4 y_3)}{50} \;, \quad y \in S \;.
	\end{aligned}
\end{equation}
The surface-spatial operator is now $A_S(y,u) = \laplace_S u$.

Here again, we compare to a highly accurate solution obtained from parametrizing the problem.
By $\bar{u}(t,\theta) = u(t,\gamma(\theta))$, where $\gamma$ is the same parametrization that we used in \eqref{eqn:AdvecParam},
the diffusion equation~\eqref{eqn:Heat} transforms to
\begin{equation*}
	\pd_t \bar{u} - \frac{1}{|\gamma'(\theta)|} \pd_\theta \left( \frac{1}{|\gamma'(\theta)|} \pd_\theta \bar{u} \right) = 0 \;,\quad
	\bar{u}(0,\theta) = \frac{\exp(4 \cos(\theta)^2)}{50} \;,\quad \bar{u}(t,2\pi) = \bar{u}(t,0) \;.
\end{equation*}
The formal solution of the latter (with frequency parameter $\omega = 2\pi/s(2\pi)$ and arc-length function $s(\theta)$) is given by
\begin{align*}
	\bar{u}(t,\theta) &= \sum\limits_{m=-\infty}^{\infty} c_m \; e^{-\omega^2 m^2 t} \; e^{i \omega m \, s(\theta)} \;, &
	c_m & = \frac{1}{s(2\pi)} \int\limits_{0}^{2\pi}  \frac{\exp(4 \cos(\theta)^2)}{50} \; e^{-i \omega m \, s(\theta)} \; |\gamma'(\theta)| \; d\theta \;.
\end{align*}
We again used Chebfun \cite{chebfunv4.1} to obtain highly accurate approximations to $c_m$ and $s(\theta)$. Moreover, we restrict the summation to $-M \leq m \leq M$ where $M$
is chosen such that the bound $e^{-\omega^2 M^2 t} c_0$  is lower than machine accuracy.

In Section~\ref{sect:diffusion}, we have discussed three different ways to deal with the Laplace--Beltrami operator by the closest point calculus which in turn give rise to three different operators $A$.
Now, we solve the diffusion equation \eqref{eqn:Heat} using the Closest Point Method with each of these (in the evolve step) and
with each of our three different closest point functions $\ecp$, and $\cp$, $\hat{\cp}$ from Example 2.  The extension step is performed with tri-cubic interpolation.
The time step is chosen as $\tau = 0.2 h^2$ for numerical stability.
The errors are measured at time $t=0.1$ in the $l_{\infty}$-norm.

\begin{enumerate}[1.]
	\item Writing the Laplace--Beltrami operator as $\laplace_S u = \diver_S ( \nabla_S u)$, the simplest possibility is
			\begin{equation}\label{eqn:lap1}
				A(v) = \diver (\nabla v) = \laplace v \;.
			\end{equation}
			But, recall that this need not work if the closest point function does not satisfy the requirement of Theorem~\ref{theo:tensor2}.
			$A$ is discretized by the usual $\Order(h^2)$-accurate $7$-point stencil.
			 Table~\ref{tab:ErrTabHeat1} shows the errors.
			 We observe convergence at the expected rate $\Order(h^2)$ when using the Euclidean closest point function $\ecp$,
			 while $\cp$ and $\hat{\cp}$ do not yield a convergent method.
			 With $\ecp$, as opposed to $\cp$ and $\hat{\cp}$, we can be sure that $A|_S = A_S$ since $\ecp$ satisfies
			 the requirement of Theorem~\ref{theo:tensor2}.

			\begin{table}
				\caption{Errors measured in $l_{\infty}$-norm at stop time $t=0.1$ for Closest Point Method approximation of the
							diffusion equation~\eqref{eqn:Heat} on the curve shown in Figure~\ref{fig:circle_pringle} (right)
							using various closest point functions.
							The spatial operator on the embedding space is $A(v) = \laplace v$ as in \eqref{eqn:lap1}.
							We observe convergence when using $\ecp$ as expected. But, here, $\cp$ and $\hat{\cp}$ do not yield a convergent method
							as they (with hindsight) do not satisfy the requirement of Theorem \ref{theo:tensor2}.}
				\label{tab:ErrTabHeat1}
				\begin{center}
					 \begin{tabular}{|c|c|c|c|}
						\hline
						$h$ & $\cp$ & $\hat{\cp}$ & $\ecp$ \\
						\hline
						0.0125 &  0.017394 &  0.017394 &  4.717879e-04 \\
						0.00625 &  0.017499 &  0.017499 &  1.173944e-04 \\
						0.003125 &  0.017526 &  0.017526 &  2.927441e-05 \\
						\hline
					 \end{tabular}
				\end{center}
		  \end{table}
	\item Now, we rewrite the Laplace--Beltrami operator as $\laplace_S u = \diver_S ( P \nabla_S u)$, where the projector $P$ is given by the outer product $P(y) = T(y) \cdot T(y)^T$
			and $T$ is the tangent field as in \eqref{eqn:Advec}. Based on this formulation we obtain the following operator
			\begin{equation}\label{eqn:lap2}
				A(x,v) = \diver ( P \circ \cp(x) \cdot \nabla v ) = \sum\limits_{l=1}^3 \pd_{x_l} \left( \sum\limits_{m=1}^3 P_{lm} \circ \cp(x) \; \pd_{x_m} v \right) \;,
			\end{equation}
			and Theorem \ref{theo:tensor} ensures that the operators coincide on the surface, i.e., $A|_S = A_S$ for all
			closest point functions in accordance with Definition \ref{def:cpop}.
			$A$ is discretized by replacing the partial differential operators $\pd_{x_l}$ with corresponding $\Order(h^2)$-accurate central difference operators.
			Table~\ref{tab:ErrTabHeat2} shows the errors.
			We observe convergence at the expected rate $\Order(h^2)$ for all three closest point functions and there is no significant difference regarding the choice of the closest point function.
			Note that this approach does require that we know the tangent field.

			\begin{table}
				\caption{Errors measured in $l_{\infty}$-norm at stop time $t=0.1$ for Closest Point Method approximation of the
				  diffusion equation~\eqref{eqn:Heat} on the curve shown in Figure~\ref{fig:circle_pringle} (right)
				  using various closest point functions.
				  The spatial operator on the embedding space is $A(x,v) = \diver ( P \circ \cp(x) \cdot \nabla v )$ as in \eqref{eqn:lap2}.
				  There is no significant difference regarding the choice of the
				  closest point function.}
				\label{tab:ErrTabHeat2}
				\begin{center}
					 \begin{tabular}{|c|c|c|c|}
						\hline
						$h$ & $\cp$ & $\hat{\cp}$ & $\ecp$ \\
						\hline
						0.0125 &  4.805473e-05 &  4.856909e-05 &  4.793135e-05 \\
						0.00625 &  1.198697e-05 &  1.211579e-05 &  1.195396e-05 \\
						0.003125 &  2.975015e-06 &  3.008249e-06 &  2.968171e-06 \\
						\hline
					 \end{tabular}
				\end{center}
		  \end{table}
	\item Finally, we work with re-extensions and appeal to the Gradient and Divergence Principles~\ref{cor:PrinGrad} and~\ref{cor:PrinDiv}:
			\begin{equation}\label{eqn:lap3}
				A(v) = \diver( \nabla v \circ \cp) = \sum\limits_{l=1}^3 \pd_{x_l} [(\pd_{x_l} v) \circ \cp ] \;.
			\end{equation}

			The discretization of $A$ involves two steps: the partial differential operators $\pd_{x_l}$ are replaced with corresponding central difference operators,
			the re-extensions $(\pd_{x_l} v) \circ \cp$ are replaced with tri-cubic interpolation.
			Table~\ref{tab:ErrTabHeat3} shows the errors.
			We observe convergence at the expected rate $\Order(h^2)$ for all three closest point functions.
			There is no significant difference regarding the choice of the closest point function.

			\begin{table}
				\caption{Errors measured in $l_{\infty}$-norm at stop time $t=0.1$ for Closest Point Method approximation of the
				  diffusion equation~\eqref{eqn:Heat} on the curve shown in Figure~\ref{fig:circle_pringle} (right)
				  using various closest point functions.
				  The spatial operator on the embedding space is $A(v) = \diver( \nabla v \circ \cp)$ as in \eqref{eqn:lap3}.
				  There is no significant difference regarding the choice of the
				  closest point function.}
				\label{tab:ErrTabHeat3}
				\begin{center}
					 \begin{tabular}{|c|c|c|c|}
						\hline
						$h$ & $\cp$ & $\hat{\cp}$ & $\ecp$ \\
						\hline
						0.0125 &  1.508222e-04 & 1.503411e-04  &  1.492513e-04  \\
						0.00625 &  3.754836e-05 & 3.743075e-05  &  3.715598e-05 \\
						0.003125 &  9.347565e-06 & 9.319686e-06  &  9.251365e-06 \\
						\hline
					 \end{tabular}
				\end{center}
		  \end{table}
\end{enumerate}
\smallskip

In the three experiments above we solved the same surface problem \eqref{eqn:Heat}.
In experiment~1 we confirm that when using a special closest point function satisfying the requirements of
Theorem~\ref{theo:tensor2}---namely $\ecp$---we can drop the second extension. This reduces the computational costs (fewer interpolation operations) compared to experiment~3 where
we explicitly use the re-extension. But comparing the right-most column of Table~\ref{tab:ErrTabHeat1} with Tables~\ref{tab:ErrTabHeat2} and \ref{tab:ErrTabHeat3} we can see
that either using more knowledge about the surface (the explicit use of $P$) as in experiment~2 or doing more work (re-extensions) as in experiment~3 pays off---at least in this particular problem---with
better error constants.

Moreover, experiment~3 (as well as the advection experiment above)
provides some evidence that when using re-extensions after each
differential operator (that is, using the framework of
Section~\ref{sect:calcCP}) all closest point functions work equally
well, both in theory and in practice.
The choice of the closest point function in practice might depend, for
example, on which closest point function is easiest to construct.

\section{Conclusions}
We presented a closest point calculus which makes use of closest point functions.
This calculus forms the basis of a general Closest Point Method along the lines of \cite{sjr:SimpleEmbed}.
The original Closest Point Method of \cite{sjr:SimpleEmbed} inspired the present work:
after becoming aware of the key property $D \ecp(y) = P(y)$ which accounts for the gradient principle of the original method,
we turned it into a definition of a general class of closest point functions namely Definition~\ref{def:cpop}.
Here, we characterize such closest point functions (see Theorems \ref{theo:cpopDiffInv} and \ref{theo:CPbyGeo}) and prove that this class of closest point functions yields the desired
closest point calculus (see Theorem~\ref{theo:cpopDiff} and the Gradient and Divergence Principles~\ref{cor:PrinGrad} and~\ref{cor:PrinDiv} which follow).
The closest point calculus is also sufficient to tackle higher order surface differential operators by appropriate combinations of the principles,
however for all surface-intrinsic diffusion operators the calculus can be simplified
to use fewer closest point extensions (see Theorems~\ref{theo:tensor} and~\ref{theo:tensor2}).
Finally, the basic principles of the original Closest Point Method are now contained and proven in our general framework.

In addition to the framework, we describe a construction of closest point functions given a level-set description of the surface and show examples.
This demonstrates that there are interesting closest point functions besides the Euclidean one. Furthermore, we demonstrate on two examples (the advection equation and the diffusion equation on
a closed space curve) that the Closest Point Method combined with non-Euclidean closest point functions works as expected.

\bibliographystyle{siam}
\bibliography{CPFunctions}{}

\end{document}